\newcommand{\eps}{\varepsilon}
\newcommand{\R}{\mathbb R}
\newcommand{\N}{\mathbb N}
\newcommand{\J}{{\cal J}}
\DeclareMathOperator*{\esssup}{ess\; sup}
\DeclareMathOperator*{\essinf}{ess\; inf}
\newcommand\meas{{\rm meas}}
\newcommand\e{{\rm e}}
\newtheorem{corollary}{Corollary}[section]
\newtheorem{theorem}[corollary]{Theorem}
\newtheorem{lemma}[corollary]{Lemma}
\newtheorem{proposition}[corollary]{Proposition}
\theoremstyle{definition}
\newtheorem{definition}[corollary]{Definition}
\newtheorem{remark}[corollary]{Remark}
\numberwithin{equation}{section}
\begin{document}

\title{{{\bf Radial bounded solutions\\
 for modified Schr\"odinger equations}} 
\footnote{The research that led to the present paper was partially supported 
by Horizon Europe Seeds STEPS: STEerability and controllability of PDEs in Agricolture and Physical Models, CUP: H91I21001640006.  \\
The authors are members of Research Group INdAM-GNAMPA.}}

\author{Federica Mennuni and Addolorata Salvatore\\
{\small Dipartimento di Matematica}\\
{\small Universit\`a degli Studi di Bari Aldo Moro} \\
{\small Via E. Orabona 4, 70125 Bari, Italy}\\
{\small \it federica.mennuni@uniba.it}\\
{\small \it addolorata.salvatore@uniba.it}}
\date{}

\maketitle

\begin{abstract}
We study the quasilinear equation 
\[(P)\qquad
- {\rm div} (a(x,u,\nabla u)) +A_t(x,u,\nabla u) + |u|^{p-2}u\  =\  g(x,u) \qquad \hbox{in $\R^N$,}
\]
with $N\ge 3$ and $p > 1$.  Here, we suppose $A : \R^N \times \R \times \R^N \to \R$ is a given ${C}^{1}$-Carath\'eodory function which grows as $|\xi|^p$ with $A_t(x,t,\xi) = \frac{\partial A}{\partial t}(x,t,\xi)$, $a(x,t,\xi) = \nabla_\xi A(x,t,\xi)$ and $g(x,t)$ is a given  Carath\'eodory function on $\R^N \times \R$ which grows as $|\xi|^q$ with $1<q<p$.

Suitable assumptions on $A(x,t,\xi)$ and $g(x,t)$ 
set off the variational structure of $(P)$ and its related functional $\J$
is $C^1$ on the Banach space $X = W^{1,p}(\R^N) \cap L^\infty(\R^N)$.
In order to overcome the lack of compactness, we assume
that the problem has radial symmetry, then we look for 
critical points of $\J$ restricted to $X_r$, subspace of the radial functions in $X$.

Following an approach which exploits the interaction between
the intersection norm in $X$ and the norm on $W^{1,p}(\R^N)$, we prove the existence 
of at least two weak bounded radial solutions of $(P)$, one positive and one negative, by applying a generalized version
of the Minimum Principle.
\end{abstract}

\noindent
{\it \footnotesize 2000 Mathematics Subject Classification}. {\scriptsize 35J20, 35J92, 35Q55, 58E05}.\\
{\it \footnotesize Key words}. {\scriptsize Quasilinear elliptic equation, modified Schr\"odinger equation,
positive radial bounded  solution, weak Cerami--Palais--Smale condition, Minimum Principle}.


\section{Introduction} \label{secintroduction}

In this paper we look for weak radial bounded solutions for the quasilinear equation 
\begin{equation}\label{euler}
- {\rm div} (a(x,u,\nabla u)) +A_t(x,u,\nabla u) + |u|^{p-2}u\   =\  g(x,u) \qquad \hbox{in $\R^N$,}
\end{equation}
where $p > 1$ and $N\geq 3$, $A :{\R}^{N} \times \R \times {\R}^{N} \to \R$ is a ${{C}}^{1}$-Carath\'eodory function with partial derivatives
\begin{equation*} \label{derA}
A_t(x,t,\xi) =\ \frac{\partial A}{\partial t}(x,t,\xi), \quad a(x,t,\xi) =\left(\frac{\partial A}{\partial {\xi_1}}(x,t,\xi), ...  ,  \frac{\partial A}{\partial {\xi_N}}(x,t,\xi)\right)
\end{equation*}
and $g:{\R}^{N} \times \R \to \R$ is a  suitable Carath\'eodory function.

Equation \eqref{euler} generalizes quasilinear equations describing several physical phenomena such as the self-channeling of a high-power ultra short laser, or also some problems which arise in plasma physics, fluid mechanics, mechanics and in the condensed matter theory (see \cite{PSW} and references therein or also \cite{CASPO} for some model problems).

If $A(x,t, \xi)=\bar{A}|\xi|^p$  with $\bar{A}$ real constant, equation \eqref{euler} turns out to be the $p$--Laplacian equation
\begin{equation} \label{equat1.3}
-\Delta_{p}u+|u|^{p-2}u= g(x,u) \qquad \hbox{in $\R^N$.}
\end{equation}
In the case $p =2$, \eqref{equat1.3} reduces to the following Schr\"odinger equation 
\[
-\Delta u +u= g(x,u) \qquad \hbox{in $\R^N$}
\]
which is a central topic in Nonlinear Analysis (see e.g., \cite{BCS2015}, \cite{BW}, \cite{CDeVS}, \cite{CPS}, \cite{DS}, \cite{Ra1}, \cite{AS}).

Many authors studied also equation \eqref{equat1.3} in the general case $p>1$ (see, e.g., \cite{BGR}, \cite{BCS2016}, \cite{LW1}, \cite{LiuZheng}).

We note that equation \eqref{equat1.3} has a variational structure, but there is a lack of compactness as the problem is settled in the whole Euclidean space $\R^N$ and classical variational tools do not work; thus suitable assumptions on the involved functions are required.

On the contrary, even if the function $A(x,t,\xi)$ has the particular form $\frac{1}{p}A_{1}(x,t)|\xi|^p$ but the coefficient $A_{1}(x,t)$ is not constant, besides the lack of the compactness the study of equation \eqref{euler} presents another difficulty: the loss of a direct variational formulation in the space $W^{1,p}(\R^N)$. Let us point out that this problem arises also if we look for solutions verifying homogeneous Dirichlet conditions in a bounded domain $\Omega$. Indeed, the natural action functional
\[
J_1(u)\ =\  \frac1p\ \int_{\Omega} A_1(x,u)|\nabla u|^p dx + \frac1p\ \int_{\Omega} |u|^p dx - \int_{\Omega} G(x,u) dx,
\]
is not well defined in $W_0^{1,p}(\Omega)$ if $A_1(x,t)$ is unbounded with respect to $t$. Moreover, even if $A_1(x,t)$ is strictly positive and bounded with respect to $t$ but $\frac{\partial A_1}{\partial t}(x,t) \ne 0$, then $J_1$ is defined in $W_0^{1,p}(\Omega)$ but it is  G\^ateaux differentiable only along directions of 
$W_0^{1,p}(\Omega) \cap L^\infty(\Omega)$.

Thus, many authors have studied equation \eqref{euler} by using non-smooth techniques or introducing a suitable change of variable if the term $A(x,t,\xi)$ has a very particular form or giving a ``good'' definition of critical point either on bounded domains or in unbounded ones (see, e.g., \cite{AB1}, \cite{AG}, \cite{BMP}, \cite{BP},  \cite{Ca},  \cite{CD},  \cite{CJ},  \cite{CDM},  \cite{LW},  \cite{LWW},  \cite{PSW}).

More recently, Candela and Palmieri in \cite{CP1}-\cite{CP3} consider the functional
\[
\J(u)\ =\   \int_{\Omega} A(x,u,\nabla u) dx + \frac1p\ \int_{\Omega} |u|^p dx - \int_{\Omega} G(x,u) dx,
\]
defined on the Banach space $W_0^{1,p}(\Omega) \cap L^\infty(\Omega)$ equipped with the intersection norm.

Introducing a new weak Cerami--Palais--Smale condition (see Definition \ref{defwCPS}) they state some abstract critical points Theorems. Using this variational approach, the existence of at least one bounded solution of \eqref{euler} in the case $A(x,t,\xi)=\frac{1}{p}A_1(x,t)|\xi|^p$ has been stated when $g(x,t)$ grows as $|t|^q$ with $q>p$ but subcritical and the involved functions are radially symmetric in \cite{CS_radiale} or the term $|u|^{p-2}u$ is mutiplied by a weight $V(x)$ verifying suitable assumptions in \cite{CSS1} (see also \cite{MeMu_stampa} and \cite{SalSp} where a generalized $(p,q)$-Laplacian equation in $\R^N$ is studied).

Always in the presence of a suitable weight $V(x)$, the existence of solutions of equation like to \eqref{euler} has been investigated in \cite{MS_stampa} (see also \cite{MS}) if $A(x,t,\xi)$ is a more general function which grows as $|\xi|^p$ and $g(x,t)$ has a sub-$p$-linear growth of the type
\[
|g(x,t)| \leq \eta(x) |t|^{q-1}
\]
with $\eta$ a suitable measurable function and $1<q<p$.

We notice that the results stated in \cite{MS}--\cite{MS_stampa} do not cover the case $V(x)=1$, so they do not apply to the equation \eqref{euler}. Therefore, in this paper we want to look for solutions of the equation \eqref{euler} when $A(x,t,\xi)$ and $g(x,t)$ beyond some hypotheses similar to those ones required in \cite{MS_stampa} are radially symmetric in $x$. To this aim, in Lemma \ref{lemma_bmp} we will state a convergence results in $\R^N$ already proved in bounded domains by Boccardo, Murat and Puel in \cite[Lemma 5]{BMP} (see also \cite[Lemma 4.5]{MeMu_stampa}).

The paper is organized as follows. In Section \ref{secabstract} we introduce the weak Cerami--Palais--Smale condition and the related Minimum Principle (see Proposition \ref{Minimum Principle}). In Section \ref{variational} we give some preliminary assumptions on the functions $A(x,t, \xi)$ and $g(x,t)$ which allow to give a variational 
formulation of the equation \eqref{euler}. In Section \ref{secmain} we consider some further assumptions, then we state our main results (see Theorem \ref{ThExist}) and we prove some properties of the action functional $\J$ and a convergence result {\em \`a la} Boccardo-Murat-Puel in $\R^N$. Finally in Section \ref{secproof} we prove that $\J$ verifies the weak Cerami--Palais--Smale condition in the subspace $X_r$ of the radial function of $X=W^{1,p}(\R^N) \cap L^\infty(\R^N)$ and then we state the existence of two nontrivial weak radial bounded solutions, one negative and one positive, thus concluding the proof of Theorem \ref{ThExist}.


\section{Abstract tools} \label{secabstract}

In this section we denote by $(X, \|\cdot\|_X)$ a Banach space with dual space $(X',\|\cdot\|_{X'})$,  $(W,\|\cdot\|_W)$ another Banach space such that $X \hookrightarrow W$ continuously,
and by $J: X \to \R$ a given $C^1$ functional.

Nevertheless, to avoid any ambiguity, we will henceforth denote by $X$ the space equipped with its norm $\|\cdot\|_X$, while, if the norm $\|\cdot\|_W$ is involved, we will write it explicitly.

For simplicity, taking $\beta \in \R$, we say that a sequence
$(u_n)_n\subset X$ is a {\sl Cerami--Palais--Smale sequence at level $\beta$},
briefly {\sl $(CPS)_\beta$--sequence}, if
\begin{equation*}
\lim_{n \to +\infty}J(u_n) = \beta\quad\mbox{and}\quad 
\lim_{n \to +\infty}\|dJ\left(u_n\right)\|_{X'} (1 + \|u_n\|_X) = 0.
\end{equation*}

Moreover, $\beta$ is a {Cerami--Palais--Smale level}, briefly {{\sl $(CPS)$--level}, if there exists a $(CPS)_\beta$--sequence. 

The functional $J$ satisfies the classical Cerami--Palais--Smale condition in $X$ at the level $\beta$ if every sequence $(CPS)_\beta$--sequence converges in $X$ up to subsequences. However, thinking about the setting of our problem, in general a $(CPS)_\beta$--sequence may also exist which is unbounded in $\|\cdot\|_X$ but which converges with respect to $\|\cdot\|_W$. Then, we can weaken the classical Cerami--Palais--Smale condition in an appropriate way according to ideas that have already been developed in previous papers (see, for example, \cite{CP1}--\cite{CP3}).  

\begin{definition} \label{defwCPS}
The functional $J$ satisfies the
{\slshape weak Cerami--Palais--Smale 
condition at level $\beta$} ($\beta \in \R$), 
briefly {\sl $(wCPS)_\beta$ condition}, if for every $(CPS)_\beta$--sequence $(u_n)_n$,
a point $u \in X$ exists such that 
\begin{description}{}{}
\item[{\sl (i)}] $\displaystyle 
\lim_{n \to+\infty} \|u_n - u\|_W = 0\quad$ (up to subsequences),
\item[{\sl (ii)}] $J(u) = \beta$, $\; dJ(u) = 0$.
\end{description}
If $J$ satisfies the $(wCPS)_\beta$ condition at each level $\beta \in I$, $I$ real interval, 
we say that $J$ satisfies the $(wCPS)$ condition in $I$.
\end{definition}

Let us point out that, due to the convergence only in the norm of $W$, the {\sl $(wCPS)_\beta$ }condition  implies that the set of critical points of $J$ at the $\beta$ level is compact with respect to $\|\cdot\|_W$, so that we can state a Deformation Lemma and some abstract theorems about critical points (see \cite{CP3}). In particular, the following Minimum Principle applies (for the proof, see \cite[Theorem 1.6]{CP3}).

\begin{proposition}{(Minimum Principle)}
\label{Minimum Principle}
If $J \in {C}^{1}(X,\R)$ is bounded from below in $X$ and {\sl $(wCPS)_\beta$} holds at level $\beta =\displaystyle{\inf_{X} J \in \R}$, then $J$ attains its infimum, i.e.,  ${u}_0 \in X$ exists such that $J({u}_0)= \beta$.
\end{proposition}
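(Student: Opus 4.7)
The plan is to produce a $(CPS)_\beta$--sequence at the level $\beta = \inf_X J$ and then invoke the $(wCPS)_\beta$ hypothesis to extract a minimizer.

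Since $J \in C^1(X,\R)$ is bounded from below with $\beta := \inf_X J \in \R$, the first step is to appeal to an Ekeland-type variational principle tailored so as to yield Cerami (rather than mere Palais--Smale) sequences. Concretely, one applies Ekeland's principle on $X$ endowed with the complete equivalent metric
\[
d(u,v)\ =\ \inf_{\gamma} \int_0^1 \frac{\|\gamma'(s)\|_X}{1+\|\gamma(s)\|_X}\, ds,
\]
where the infimum is taken over $C^1$ paths $\gamma : [0,1] \to X$ joining $u$ to $v$. This furnishes a minimizing sequence $(u_n)_n \subset X$ with $J(u_n) \to \beta$; differentiating the Ekeland variational inequality along an arbitrary direction $h \in X$ and using that $d(u_n+th,u_n)/t \to \|h\|_X/(1+\|u_n\|_X)$ as $t \to 0^+$, one obtains $\|dJ(u_n)\|_{X'}(1+\|u_n\|_X) \to 0$; hence $(u_n)_n$ is a $(CPS)_\beta$--sequence in the sense defined above.

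The second step is to apply the $(wCPS)_\beta$ hypothesis to this sequence: Definition \ref{defwCPS} then provides $u_0 \in X$ such that, up to a subsequence, $\|u_n - u_0\|_W \to 0$ and $J(u_0) = \beta$, $dJ(u_0) = 0$. The identity $J(u_0) = \beta = \inf_X J$ is exactly the desired conclusion that $J$ attains its infimum at $u_0$.

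The only genuinely delicate point is the first step, namely the production of a $(CPS)_\beta$--sequence with the reinforcing factor $(1+\|u_n\|_X)$; the path-weighted reparametrization of the distance is the standard device that injects this factor upon differentiation of the Ekeland inequality. Once such a sequence is at hand, the weak Cerami--Palais--Smale assumption packages all the needed compactness and the remainder of the argument is essentially definitional.
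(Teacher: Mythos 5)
Your argument is correct. Note first that the paper does not prove this proposition at all: it simply defers to \cite[Theorem 1.6]{CP3}, so there is no in-paper proof to compare against line by line. Your proof supplies exactly the standard mechanism behind that cited result: Ekeland's variational principle applied with the path metric $d(u,v)=\inf_\gamma\int_0^1\|\gamma'(s)\|_X\,(1+\|\gamma(s)\|_X)^{-1}ds$ produces a minimizing sequence which is moreover a $(CPS)_\beta$--sequence (the weight $(1+\|\cdot\|_X)^{-1}$ is precisely what converts the usual Palais--Smale estimate into the Cerami one), and then $(wCPS)_\beta$ hands you a point $u_0$ with $J(u_0)=\beta$ and $dJ(u_0)=0$; the conclusion is immediate. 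Two small points of precision: the metric $d$ is \emph{topologically} equivalent to the norm metric but not uniformly equivalent (this non-equivalence is the whole point), and one should check, as is standard, that $(X,d)$ is complete --- a $d$-Cauchy sequence is norm-bounded because $d(u,v)\ge|\log\bigl((1+\|u\|_X)/(1+\|v\|_X)\bigr)|$, and on norm-bounded sets $d$ dominates a multiple of the norm distance --- and that $J$ is $d$-lower semicontinuous, which follows from its norm-continuity. In the differentiation step you only need the upper bound $d(u_n+th,u_n)\le t\|h\|_X(1+\|u_n\|_X)^{-1}+o(t)$ obtained from the straight path, which is what your computation uses. With these routine verifications the proof is complete and is, as far as one can tell, the same argument as in the cited reference.
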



\section{Variational setting and first properties}
\label{variational}

Here and in the following, let $\N=\{1,2,...\}$ be the set of the strictly positive integers and we denote by $x \cdot y$ the inner product in $\R^N$ and $|\cdot|$ the standard norm on any Euclidean space as the dimension of the considered vector is clear and no ambiguity arises.
Furthermore, we denote by:
\begin{itemize} 
\item ${B}_{R}(x) =  \{ y \in {\R}^{N}:\vert y-x \vert < R \}$ the open ball in ${\R}^{N}$ with center in $x \in {\R}^{N}$ and radius $R>0$;
\item ${{B}}^{c}_R=\R^N\setminus {B}_{R}(0)$ the complement of the open ball ${B}_{R}(0)$ in ${\R}^{N}$;
\item $\meas(\Omega)$ the usual Lebesgue measure of a measurable set $\Omega$ in $\R^N$;
\item $L^l(\R^N)$ the Lebesgue space with
norm $|u|_l = \displaystyle{ \left(\int_{\R^N}|u|^l dx\right)^{1/l}}$ if $1 \le l < +\infty$;
\item $L^\infty(\R^N)$ the space of Lebesgue--measurable 
and essentially bounded functions $u :\R^N \to \R$ with norm
\[
|u|_{\infty} = \esssup_{\R^N} |u|;
\]
\item $W^{1,p}(\R^N)$ the classical Sobolev space with
norm $\|u\|_{W} = \displaystyle{(|\nabla u|_p^p + |u|_p^p)^{\frac1p}}$ if $1 \le p < +\infty$;
\item $W_r^{1,p}(\R^N) = \{u \in W^{1,p}(\R^N): u(x) = u(|x|) \ \mbox{a.e.}\ x \in \R^N\}$ 
the subspace of the radial function of $W^{1,p}(\R^N)$ equipped with 
the same norm $\|\cdot\|_W$ with dual space $(W_r^{1,p}(\R^N))'$.
\end{itemize}

From the Sobolev Embedding Theorems, for any $l \in [p,p^*]$
with $p^* = \frac{pN}{N-p}$ if $N > p$, or any $l \in [p,+\infty[$ if $p = N$,
the Sobolev space $W^{1,p}(\R^N)$ is continuously embedded in $L^l(\R^N)$, i.e.,
a constant $\sigma_l > 0$ exists such that 
\begin{equation}\label{Sob1}
|u|_l\ \le\ \sigma_l \|u\|_W \quad \hbox{for all $u \in W^{1,p}(\R^N)$}
\end{equation}
(see, e.g., \cite[Corollaries 9.10 and 9.11]{Br}). Clearly, it is $\sigma_p = 1$.
On the other hand, if $p > N$ then  $W^{1,p}(\R^N)$ is continuously imbedded in $L^\infty(\R^N)$
(see, e.g., \cite[Theorem 9.12]{Br}).

Thus, we define 
\begin{equation}\label{space}
X := W^{1,p}(\R^N) \cap L^\infty(\R^N),\qquad
\|u\|_X = \|u\|_W + |u|_\infty.
\end{equation}

From now on, we assume $1<p \le N$ as, otherwise, it is $X = W^{1,p}(\R^N)$ and
the proofs can be simplified.

\begin{lemma}\label{immergo}
For any $l \ge p$ the Banach space $X$ is continuously embedded in $L^l(\R^N)$, i.e., a constant $\sigma_l > 0$ exists such that 
\begin{equation}\label{Sob2}
|u|_l\ \le\ \sigma_l \|u\|_X \quad \hbox{for all $u \in X$.}
\end{equation}
\end{lemma}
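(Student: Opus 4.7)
The plan is to split the range of $l$ according to whether $l$ falls within the Sobolev range already covered by \eqref{Sob1}, or exceeds it, and then handle the remaining case by an elementary pointwise interpolation using the $L^\infty$--bound.

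First, observe that if $p=N$, then by the assumption preceding the statement we already have $W^{1,p}(\R^N)\hookrightarrow L^l(\R^N)$ for every $l\in[p,+\infty)$, so \eqref{Sob1} immediately gives $|u|_l \le \sigma_l\|u\|_W \le \sigma_l\|u\|_X$ and there is nothing to prove. So assume $1<p<N$, in which case $p^* = \frac{pN}{N-p}<+\infty$. For $l\in[p,p^*]$ the conclusion is again a direct consequence of \eqref{Sob1}, combined with the elementary inequality $\|u\|_W \le \|u\|_X$.

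The only nontrivial case is $l>p^*$. Here I would use the pointwise interpolation
\[
|u(x)|^{l} \;=\; |u(x)|^{p^*}\cdot |u(x)|^{l-p^*} \;\le\; |u|_\infty^{l-p^*}\, |u(x)|^{p^*}\qquad \hbox{for a.e. }x\in\R^N.
\]
Integrating over $\R^N$ and taking $l$--th roots yields
\[
|u|_l \;\le\; |u|_\infty^{\frac{l-p^*}{l}}\, |u|_{p^*}^{\frac{p^*}{l}}.
\]
Since the two exponents add up to $1$, Young's inequality gives $|u|_\infty^{(l-p^*)/l} |u|_{p^*}^{p^*/l} \le |u|_\infty + |u|_{p^*}$, and then \eqref{Sob1} applied to $|u|_{p^*}$ produces
\[
|u|_l \;\le\; |u|_\infty + \sigma_{p^*}\|u\|_W \;\le\; \max\{1,\sigma_{p^*}\}\,\|u\|_X,
\]
so that taking $\sigma_l := \max\{1,\sigma_{p^*}\}$ concludes the argument.

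There is no real obstacle here, the argument is entirely elementary: the only point to watch is the requirement $p<N$ in order to have $p^*$ finite (so that \eqref{Sob1} gives an $L^{p^*}$--bound to interpolate with the $L^\infty$--bound), and the separate but trivial treatment of $p=N$. The new constant $\sigma_l$ defined above depends only on $p$, $l$ and $N$ (through $\sigma_{p^*}$), as required.
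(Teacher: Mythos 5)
Your proof is correct and follows essentially the same approach as the paper: the case $p\le l\le p^*$ (or $p=N$) is dispatched by \eqref{Sob1}, and the case $l>p^*$ is handled by pointwise interpolation against the $L^\infty$--norm. The only (harmless) difference is that the paper factors out $|u|_\infty^{l-p}$ and interpolates with the $L^p$--norm directly, which yields $\sigma_l=1$ and avoids the Young's inequality step, whereas you interpolate with $L^{p^*}$.
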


\begin{proof}
If $p=N$ or if $p \le l \leq  p^*$ the embedding \eqref{Sob2} follows from  \eqref{Sob1} and \eqref{space}. \\
On the other hand, if $l > p^*$ then, taking any $u \in X$,
again \eqref{space} implies
\[
\int_{\R^N} |u|^l dx\ \le\ |u|_\infty^{l-p} \int_{\R^N} |u|^{p} dx \le 
|u|_\infty^{l-p} \|u\|_W^{p} \le \|u\|_X^{l},
\] 
thus \eqref{Sob2} holds with $\sigma_l =1$.
\end{proof}

From Lemma \ref{immergo} it follows that if
$(u_n)_n \subset X$, $u \in X$ are such that
$u_n \to u$ in $X$, then $u_n \to u$ also in $L^l(\R^N)$
for any $l \ge p$. This result can be weakened as follows. 

\begin{lemma}\label{immergo2}
If $(u_n)_n \subset X$, $u \in X$, $M > 0$ are such that
\begin{equation}\label{succ1}
\|u_n - u\|_W \to 0 \ \quad\hbox{if $n \to+\infty$,}
\end{equation}
\begin{equation}\label{succ2}
|u_n|_\infty \le M\quad \hbox{for all $n \in \N$,}
\end{equation}
then $u_n \to u$ also in $L^l(\R^N)$ for any $l \ge p$. 
\end{lemma}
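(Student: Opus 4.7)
The plan is to handle the exponent $l=p$ separately from $l>p$ and use an $L^\infty$--$L^p$ interpolation for the latter case.

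First I would observe that when $l=p$, the conclusion is immediate: by definition of $\|\cdot\|_W$ one has $|u_n-u|_p \le \|u_n-u\|_W$, so hypothesis \eqref{succ1} already gives $u_n\to u$ in $L^p(\R^N)$.

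For $l>p$, I would control the $L^\infty$ norm of the difference via the triangle inequality. Since $u\in X$, the quantity $M':=M+|u|_\infty$ is finite and, by \eqref{succ2}, $|u_n-u|_\infty \le M'$ for every $n\in\N$. Then for a.e.\ $x\in\R^N$ the pointwise bound $|u_n-u|^{l} = |u_n-u|^{l-p}\,|u_n-u|^p \le (M')^{l-p}\,|u_n-u|^p$ holds. Integrating over $\R^N$ yields
\[
|u_n-u|_l^{\,l} \ \le\ (M')^{l-p}\, |u_n-u|_p^{\,p},
\]
and the right-hand side tends to zero as $n\to+\infty$ thanks to the $L^p$--convergence established in the first step. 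Taking $l$-th roots gives $|u_n-u|_l\to 0$, which is the desired conclusion.

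I do not expect any real obstacle: everything reduces to the elementary interpolation inequality $|v|_l^{\,l}\le |v|_\infty^{\,l-p}|v|_p^{\,p}$ already used in the proof of Lemma \ref{immergo}, combined with the uniform $L^\infty$-bound supplied by \eqref{succ2}. The only mild care needed is to note that the $L^\infty$ norm of the limit $u$ is finite (because $u\in X$), so that the constant $M'$ bounding $|u_n-u|_\infty$ is independent of $n$.
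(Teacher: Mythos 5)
Your proof is correct. The key inequality is the same one the paper uses, namely the pointwise interpolation $|u_n-u|^{l}\le |u_n-u|_\infty^{\,l-p}\,|u_n-u|^{p}$ combined with the uniform bound $|u_n-u|_\infty\le M+|u|_\infty$; the only difference is in how the cases are split. The paper treats the range $p\le l\le p^*$ by the Sobolev embedding \eqref{Sob1} (so that $|u_n-u|_l\le\sigma_l\|u_n-u\|_W\to 0$) and reserves the interpolation argument for $l>p^*$ only, whereas you apply the interpolation to every $l>p$, using only the trivial bound $|u_n-u|_p\le\|u_n-u\|_W$. Your version is marginally more economical, since it never invokes the Sobolev embedding and makes clear that the hypothesis $\|u_n-u\|_W\to 0$ is used only through its $L^p$ component; both arguments are complete and rely on the same finiteness observation for $|u|_\infty$ that you correctly flag.
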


\begin{proof}
Let $1 \le p < N$ and $l > p^*$ (otherwise, it is a direct consequence of \eqref{Sob1}). 
Then, from \eqref{space}, \eqref{succ2} and \eqref{Sob1} we have that
\[
\int_{\R^N} |u_n - u|^l dx\ \le\ |u_n - u|_\infty^{l-p} \int_{\R^N} |u_n - u|^{p} dx\ \le\ 
(M + |u|_\infty)^{l-p} \|u_n - u\|_W^{p}, 
\] 
then \eqref{succ1} implies the thesis.
\end{proof}

From now on, we consider $\, A : \R^N \times \R \times \R^N \to \R\,$
and $\, g :\R^N \times \R \to \R\,$ be such that:
\begin{itemize}
\item[$(h_0)$]
$A$  is a ${C}^1$--Carath\'eodory function, i.e., $A(\cdot, t, \xi)$ is measurable for all $(t,\xi) \in \R \times \R^N$, and $A(x,\cdot, \cdot)$ is ${C}^1$ for a.e. $x \in \R^N$ ;
\item[$(h_1)$] some positive continuous functions ${\Phi}_{i}, {{\phi}}_{i}: \R \to \R, \ i \in \{0,1,2\},$ exist such that:
\[
\begin{split}
\vert A(x,t,\xi) \vert \quad \leq & \quad {\Phi}_{0}(t){\vert t \vert}^{p} + {\phi}_{0}(t) {\vert \xi \vert}^{p} \quad \quad \quad \mbox{ a.e.  in } {\R}^{N}, \mbox{ for all } (t,\xi) \in \R \times {\R}^{N}, \\
\vert A_t(x,t,\xi) \vert \quad \leq & \quad {\Phi}_{1}(t){\vert t \vert}^{p-1} + {\phi}_{1}(t) {\vert \xi \vert}^{p} \quad \quad \mbox{ a.e.  in } {\R}^{N}, \mbox{ for all } (t,\xi) \in \R \times {\R}^{N}, \\
\vert a(x,t,\xi) \vert \quad \leq & \quad {\Phi}_{2}(t){\vert t \vert}^{p-1} + {\phi}_{2}(t) {\vert \xi \vert}^{p -1}  \quad \mbox{ a.e.  in } {\R}^{N}, \mbox{ for all } (t,\xi) \in \R \times {\R}^{N};
\end{split}
\]
\end{itemize}
\begin{itemize}
\item[$(g_0)$] $g(x,t)$ is a Carath\'eodory function;
\item[$(g_1)$] a function $ \eta \in L^{\frac{p}{p-q}}({\R}^{N})$ exists, with $1<q<p$, such that 
\[0 \leq g(x,t)t  \leq \eta(x){\vert t \vert}^{q} \qquad
\hbox{a.e. in $\R^N$, for all $t \in \R$.}\]
\end{itemize}
\begin{remark}\label{remG}
From $(g_1)$ it results 
\[\vert g(x,t) \vert \leq \eta(x){\vert t \vert}^{q-1} \qquad
\hbox{a.e. in $\R^N$, for all $t \in \R$.}\]\\
Moreover,  $(g_0)$--$(g_1)$ imply that $\displaystyle G(x,t) = \int_0^t g(x,s) ds$ is
a well defined $C^1$--Ca\-ra\-th\'eo\-do\-ry function in 
$\R^N \times \R$ and
\begin{equation}
\label{alto3}
0 \leq G(x,t) \le \frac{1}q \eta(x)|t|^{q} \qquad
\hbox{a.e. in $\R^N$, for all $t \in \R$.}
\end{equation}
\end{remark}
\begin{remark}
From $(h_1)$ it follows that
\[
A(x,0,0) = A_t(x,0,0) = 0 \quad \hbox{and}\quad a(x,0,0) = 0 \qquad 
\hbox{for a.e. $x \in \R^N$.}
\] 
Moreover, from $(g_0)$--$(g_1)$ and Remark \ref{remG} we have that
\[
G(x,0) = g(x,0) = 0 \qquad 
\hbox{for a.e. $x \in \R^N$.}
\] 
Hence, $u = 0$ is a trivial solution of \eqref{euler}.
\end{remark}

\begin{proposition}
The assumptions $(g_0)$--$(g_1)$ imply that
\begin{equation*}
 \int_{{\R}^{N}} G(x,u) dx \in \R \quad \mbox{ for all } u \in X \quad (\mbox{ or better for all } u \in W^{1, p}(\R^N))
\end{equation*}
and 
\begin{equation*}
 \int_{{\R}^{N}} g(x,u)v dx \in \R \quad \mbox{ for all } u,v \in X \quad (\mbox{ or better for all } u,v \in W^{1, p}(\R^N)).
\end{equation*}
\end{proposition}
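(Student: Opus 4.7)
The plan is to reduce both statements to Hölder's inequality via the pointwise bounds already collected in Remark~\ref{remG}. In particular, from $(g_1)$ we have
$0 \le G(x,t) \le \frac{1}{q}\eta(x)|t|^{q}$ and $|g(x,t)| \le \eta(x)|t|^{q-1}$ a.e.~in $\R^N$, for every $t\in\R$. Since $g$ (and therefore $G$) is Carathéodory, the compositions $x\mapsto G(x,u(x))$ and $x\mapsto g(x,u(x))v(x)$ are measurable whenever $u,v$ are measurable, so only integrability needs to be addressed.

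For the first integral, I would observe that the exponents $\frac{p}{p-q}$ and $\frac{p}{q}$ are conjugate since $\frac{p-q}{p}+\frac{q}{p}=1$, and apply Hölder to obtain
\[
0\ \le\ \int_{\R^N} G(x,u)\,dx\ \le\ \frac{1}{q}\int_{\R^N}\eta(x)|u|^{q}\,dx\ \le\ \frac{1}{q}\,|\eta|_{\frac{p}{p-q}}\,|u|_{p}^{q},
\]
which is finite because $\eta\in L^{p/(p-q)}(\R^N)$ by $(g_1)$ and $u\in L^{p}(\R^N)$ for every $u\in W^{1,p}(\R^N)$ (in particular for $u\in X$). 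This shows $\int_{\R^N} G(x,u)\,dx\in\R$.

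For the second integral, the same idea works with a triple Hölder inequality. Since $1<q<p$, the three exponents $\frac{p}{p-q},\ \frac{p}{q-1},\ p$ satisfy $\frac{p-q}{p}+\frac{q-1}{p}+\frac{1}{p}=1$, so using $|g(x,u)v|\le\eta(x)|u|^{q-1}|v|$ I would estimate
\[
\int_{\R^N}|g(x,u)\,v|\,dx\ \le\ \int_{\R^N}\eta(x)|u|^{q-1}|v|\,dx\ \le\ |\eta|_{\frac{p}{p-q}}\,|u|_{p}^{\,q-1}\,|v|_{p},
\]
which is finite whenever $u,v\in W^{1,p}(\R^N)$ (hence in particular for $u,v\in X$), yielding $\int_{\R^N} g(x,u)v\,dx\in\R$.

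No step here looks particularly delicate: the only subtlety is to verify that $|u|^{q-1}$ belongs to the correct Lebesgue space, i.e.\ that $|u|^{q-1}\in L^{p/(q-1)}(\R^N)$, which is precisely $u\in L^{p}(\R^N)$; this uses the assumption $q>1$ from $(g_1)$, so the exponent $\frac{p}{q-1}$ is well defined and finite. The case $p=N$ or $p<N$ plays no role in this argument because only the $L^{p}$-norm of $u$ (respectively $v$) is invoked, which is controlled by $\|u\|_{W}$ (respectively $\|v\|_{W}$) via \eqref{Sob1} with $\sigma_{p}=1$.
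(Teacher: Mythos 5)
Your proof is correct and follows essentially the same route as the paper: Hölder's inequality with conjugate exponents $\frac{p}{p-q}$ and $\frac{p}{q}$ for the $G$-integral, and the triple Hölder inequality with exponents $\frac{p}{p-q}$, $\frac{p}{q-1}$, $p$ for the $g$-integral, using the pointwise bounds from Remark \ref{remG}. The resulting estimates coincide with the paper's \eqref{int1} and \eqref{int2}.
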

\begin{proof}
Let $u \in W^{1,p}({\R^N}).$ As $\eta \in L^{\frac{p}{p-q}}({\R}^{N})$ and ${\vert u \vert}^q \in L^{\frac{p}{q}}({\R}^{N}),$ Hölder's inequality with $\frac{p}{p-q}$ and $\frac{p}{q}$ conjugate exponents and  \eqref{alto3} imply that
\begin{equation} \label{int1}
0\leq\int_{{\R}^{N}}G(x,u)dx \leq \frac{1}{q}\int_{{\R}^{N}} \eta(x) {\vert u \vert}^{q} dx \leq \frac{1}{q}{\vert \eta \vert_\frac{p}{p-q}}{\vert u \vert}_p^{q}.
\end{equation}
Moreover, by applying again Hölder's inequality with  $\frac{p}{p-q},  \frac{p}{q-1} \mbox{ and } p$ conjugate exponents, we have 
\begin{equation} \label{int2}
\left|  \int_{{\R}^{N}} g(x,u)v dx \right| \leq \int_{{\R}^{N}}\left| \eta(x) {\vert u \vert}^{q-1}v  \right|dx \leq {\vert \eta \vert_\frac{p}{p-q}}\vert u \vert_p^{q-1}{\vert v \vert}_p
\end{equation}
for all $u,v \in W^{1,p}({\R^N})$.  \qedhere
\end{proof}
\begin{remark} \label{rem3.8}
From $(g_0)$-$(g_1)$ we have that
\begin{equation*}
g(x,u) \in \displaystyle{L^{\frac{p}{p-1}}\big({\R}^{N}\big)} \quad \quad \mbox{ for all } u \in W^{1, p}(\R^N).
\end{equation*}
Indeed, Hölder's inequality with  $\frac{p-1}{p-q} \mbox{ and } \frac{p-1}{q-1}$ conjugate exponents implies that
\begin{equation*}
\int_{{\R}^{N}} {\vert g(x,u)\vert}^{\frac{p}{p-1}} dx \leq \displaystyle{ \vert \eta \vert_\frac{p}{p-q}^{\frac{p}{p-1}}\vert u \vert_p^{\frac{p(q-1)}{p-1}}}.
\end{equation*}
\end{remark}

Let us point out that assumptions $(h_0)$--$(h_1)$ imply that  $A(x, u, \nabla u) \in L^{1}({\R}^{N})$ for any $u \in X$.

Therefore, from \eqref{int1} it follows that the functional

\begin{equation} \label{ediff}
\J(u) = \int_{\R^N} A(x,u,\nabla u)dx  + \frac1p\ \int_{\R^N} |u|^p dx 
- \int_{\R^N} G(x,u) dx
\end{equation}
is well defined for all $u \in X$. Moreover, taking $v \in X,$ from \eqref{int2}, the G\^ateaux differential of functional $\J$ in $u$ along the direction $v$ is given by
\begin{equation}
\label{ediff1}
\begin{split}
\langle d\J(u),v\rangle &= \int_{\R^N} a(x,u,\nabla u) \cdot \nabla v dx  +\int_{\R^N} A_t(x,u,\nabla u) v dx\\& + \ \int_{\R^N} |u|^{p-2 }uvdx- \int_{\R^N} g(x,u)v dx.
 \end{split}
\end{equation}
Now, we are ready to state the following regularity result.
\begin{proposition}\label{smooth1}
Taking $p > 1$, assume that $(h_0)$--$(h_1)$, $(g_0)$--$(g_1)$ hold.
If $(u_n)_n \subset X$, $u \in X$, $M> 0$ are such that \eqref{succ1}, \eqref{succ2} hold and
\begin{equation*}\label{succ3}
 u_n \to u\quad \hbox{a.e. in $\R^N$} \ \quad\hbox{if $n \to+\infty$,}
\end{equation*}
then
\[
\J(u_n) \to \J(u)\quad \hbox{and}\quad \|d\J(u_n) - d\J(u)\|_{X'} \to 0
\quad\hbox{if $\ n\to+\infty$.}
\]
Hence, $\J$ is a $C^1$ functional on $X$ with Fr\'echet differential
defined as in \eqref{ediff1}.
\end{proposition}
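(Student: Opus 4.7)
The plan is to verify the two convergences term by term via generalized dominated convergence arguments. Passing to an arbitrary subsequence (which suffices, as the limits are fixed) and extracting a further subsequence from \eqref{succ1}, I may assume $\nabla u_n \to \nabla u$ a.e.\ in $\R^N$; combined with the hypothesis $u_n \to u$ a.e., this gives pointwise a.e.\ convergence of every Nemytskii composition appearing below, by continuity of $A, A_t, a, G, g$ in their last two arguments. Moreover, since $|u_n|_\infty \le M$ and $u \in L^\infty(\R^N)$, the positive continuous functions $\Phi_i, \phi_i$ in $(h_1)$ evaluated at $u_n$ or $u$ are uniformly bounded by a constant $C = C(M, |u|_\infty)$. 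The final $C^1$ conclusion will follow from the sequential statement because strong convergence $u_n \to u$ in $X$ automatically satisfies \eqref{succ1}, \eqref{succ2} and a.e.\ convergence up to subsequence.

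For $\J(u_n) \to \J(u)$, the term $\frac{1}{p}\int |u|^p\,dx$ is continuous in the $L^p$-norm. The $G$-term uses \eqref{alto3}: by H\"older's inequality with conjugate exponents $\frac{p}{p-q}$ and $\frac{p}{q}$, the dominating sequence $\frac{1}{q}\eta|u_n|^q$ converges to $\frac{1}{q}\eta|u|^q$ in $L^1(\R^N)$ (thanks to Nemytskii continuity $L^p \to L^{p/q}$ together with Lemma \ref{immergo2}), and generalized dominated convergence yields the result. The $A$-term uses $(h_1)$ to obtain $|A(x, u_n, \nabla u_n)| \le C(|u_n|^p + |\nabla u_n|^p)$ pointwise, with the dominating sequence converging in $L^1(\R^N)$ to $C(|u|^p + |\nabla u|^p)$ by \eqref{succ1}; generalized dominated convergence again applies.

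For $\|d\J(u_n) - d\J(u)\|_{X'} \to 0$, I test against $v \in X$ with $\|v\|_X \le 1$, so that $|v|_p$, $|\nabla v|_p$, and $|v|_\infty$ are all $\le 1$ simultaneously, and estimate the four summands in \eqref{ediff1}. Three of them are routine: the $a$-term uses the $(h_1)$-bound $|a(x, u_n, \nabla u_n)| \le C(|u_n|^{p-1} + |\nabla u_n|^{p-1})$ pointwise, whose right-hand side converges in $L^{p/(p-1)}(\R^N)$ by Nemytskii continuity, so generalized dominated convergence gives $a(x, u_n, \nabla u_n) \to a(x, u, \nabla u)$ in $L^{p/(p-1)}$, and a H\"older pairing with $|\nabla v|_p \le 1$ yields $o(1)$; the $|u|^{p-2}u$ term follows from Nemytskii continuity $L^p \to L^{p/(p-1)}$ paired with $|v|_p \le 1$; the $g$-term is handled via $(g_1)$, Remark \ref{rem3.8}, and the same scheme as for $G$. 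The main obstacle is the $A_t$ term: the bound $|A_t(x, t, \xi)| \le \Phi_1(t)|t|^{p-1} + \phi_1(t)|\xi|^p$ mixes an $L^{p/(p-1)}$-type piece (pairing naturally with $|v|_p$) with an $L^1$-type piece (pairing with $|v|_\infty$), and $|u_n|^{p-1}$ need not lie in $L^1(\R^N)$, so a pure $L^1$-convergence argument fails. I would split $\R^N = B_R(0) \cup B_R^c$: on $B_R(0)$, the dominating sequence $C(M^{p-1} + |\nabla u_n|^p + |\nabla u|^p)$ is uniformly integrable, so Vitali's theorem gives $A_t(x, u_n, \nabla u_n) \to A_t(x, u, \nabla u)$ in $L^1(B_R(0))$, and pairing with $|v|_\infty \le 1$ produces an $o(1)$ contribution; on $B_R^c$, the $|u|^{p-1}$-type pieces paired with $|v|_p \le 1$ via H\"older give a bound of order $\bigl(\int_{B_R^c}(|u_n|^p + |u|^p)\bigr)^{(p-1)/p}$, while the $|\nabla u|^p$-type pieces paired with $|v|_\infty \le 1$ give $\int_{B_R^c}(|\nabla u_n|^p + |\nabla u|^p)$; choosing $R$ large and using the $L^p$-convergences of $u_n$ and $\nabla u_n$ to propagate the tight decay of $|u|^p$ and $|\nabla u|^p$ to the whole sequence makes both contributions uniformly small in $n$.
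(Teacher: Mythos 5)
Your proof is correct, but note that the paper does not actually carry out this argument: its ``proof'' of Proposition \ref{smooth1} is a one-line deferral to Proposition 3.10 of \cite{MS_stampa}, together with the observation that the extra term $u \mapsto \frac1p\int_{\R^N}|u|^p\,dx$ is of class $C^1$. What you have written is a self-contained version of the argument that the authors delegate to that reference, and it follows the standard template for this family of results: reduction to subsequences (legitimate because the limits $\J(u)$ and $0$ do not depend on the subsequence), a.e.\ convergence of the Nemytskii compositions with $\nabla u_n \to \nabla u$ a.e.\ extracted from \eqref{succ1}, uniform bounds on $\Phi_i(u_n),\phi_i(u_n)$ from \eqref{succ2}, and generalized dominated convergence (Pratt) or Vitali term by term, with the $G$- and $g$-terms handled through $(g_1)$, Lemma \ref{immergo2} and Remark \ref{rem3.8} exactly as you describe. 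You also correctly isolate the only genuinely delicate summand in \eqref{ediff1}, namely $\int_{\R^N} A_t(x,u_n,\nabla u_n)v\,dx$, whose $(h_1)$-bound mixes a $|t|^{p-1}$ piece (pairing with $|v|_p$) and a $|\xi|^p$ piece (pairing with $|v|_\infty$); your splitting of $\R^N$ into $B_R(0)$, where $|u_n|^{p-1}\le M^{p-1}$ is integrable and Vitali applies, and $B_R^c$, where the $L^p$-tightness of $(u_n)_n$ and $(\nabla u_n)_n$ inherited from \eqref{succ1} makes the tails small uniformly in $n$ and in $\|v\|_X\le 1$, is precisely the device needed in the unbounded setting. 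The only steps worth spelling out in a final write-up are the two you allude to in passing: the Pratt-type lemma in $L^{p/(p-1)}$ used for the $a$-term, and the standard fact that G\^ateaux differentiability plus continuity of the G\^ateaux derivative yields Fr\'echet $C^1$ regularity.
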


\begin{proof}
It is enough to simplify the proof of Proposition 3.10 in \cite{MS_stampa} by observing that the functional $ u \in X \mapsto  \displaystyle \frac{1}{p} \int_{\R^N}|u|^{p} dx \in \R$ is of class $\mathcal{C}^{1}$.
\end{proof}


\section{Statement of the main result} \label{secmain}

From now on, we assume that in addition to hypotheses $(h_0)$--$(h_1)$ and $(g_0)$--$(g_1)$,
functions $A(x,t,\xi)$ and $g(x,t)$ satisfy the following further conditions:
\begin{itemize}
\item[$(h_2)$] a constant $\alpha_0 > 0$ exists such that
\[
A(x,t,\xi) \ge \alpha_0 |\xi|^p
\quad \hbox{a.e. in $\R^N$, for all $(t,\xi) \in \R\times\R^N$;}
\]
\item[$(h_3)$] a constant $\alpha_1 > 0$ exists such that
\[
a(x,t,\xi)\cdot \xi \ge \alpha_1 |\xi|^p
\quad \hbox{a.e. in $\R^N$, for all $(t,\xi) \in \R\times\R^N$;}
\]
\item[$(h_4)$] a constant $\eta_0$ exists such that
\[
A(x,t,\xi) \leq \eta_0 \ a(x,t,\xi)\cdot \xi
\quad \hbox{a.e. in $\R^N$, for all $(t,\xi) \in \R\times\R^N$;}
\]
\item[$(h_5)$] a constant $\alpha_2 > 0$ exists such that
\[
a(x,t,\xi)\cdot \xi + A_t(x,t,\xi) t \ge \alpha_2 a(x,t,\xi)\cdot \xi
\quad \hbox{a.e. in $\R^N$, for all $(t,\xi) \in \R\times\R^N$;}
\]
\item[$(h_6)$] some constants $\mu > p$ and $\alpha_3 > 0$ exist such that
\[
\mu A(x,t,\xi) - a(x,t,\xi)\cdot \xi - A_t(x,t,\xi) t \ge \alpha_3 A(x,t,\xi)
\quad \hbox{a.e. in $\R^N$, for all $(t,\xi) \in \R\times\R^N$;}
\]
\item[$(h_7)$]
for all $\xi$, $\xi^* \in \R^N$, $\xi \ne \xi^*$, it is
\[
[a(x,t,\xi) - a(x,t,\xi^*)]\cdot [\xi - \xi^*] > 0 
\quad\hbox{a.e. in $\R^N$, for all $t\in \R$;}
\]
\item[$(h_8)$] $\ A(x,t,\xi) = A(|x|,t,\xi)\ $ a.e. in $\R^N$, for all $t \in \R$; 
\item[$(h_9)$] some real constants $l_1, l_2, \eta_1, \eta_2$ exist such that
\[ 
\lim_{ t \to 0} \frac{{\Phi}_{1}(t)}{|t|^{\eta_1}}= l_1,
 \quad \quad \lim_{ t \to 0} \frac{{\Phi}_{2}(t)}{|t|^{\eta_2}}= l_2
\]
with ${\Phi}_{1}, {\Phi}_{2}$ as in $(h_1)$ and 
\begin{equation}\label{eta1}
\eta_1 > p\min \displaystyle \lbrace \frac{2}{N-2}, \frac{p}{N-p} \rbrace
\end{equation}
and 
\begin{equation}\label{eta2}
\eta_2 > (p-1) \min \lbrace \frac{2}{N-2}, \frac{p}{N-p}  \rbrace;
\end{equation} 
\item[$(g_2)$] $\ g(x,t) = g(|x|,t)\ $ a.e. in $\R^N$, for all $t \in \R$:
\item[$(g_3)$] the function $\eta$ introduced in $(g_1)$ is such that
\[\displaystyle \esssup_{|x| \leq 1} \eta(x) \ < \ +\infty;\]
\item[$(g_4)$] $\displaystyle{\lim_{ t \to 0^{+}} \frac{g(x,t)}{t^{p-1}}=+\infty}$ \mbox{ uniformy for a.e. $x\in \R^N, |x| \leq 1$}. 
\end{itemize}
We point out some direct consequences of the previous hypotheses.

\begin{remark}
In the assumptions $(h_2)$-$(h_3)$ we may always suppose $ \alpha_0 \leq 1$ and $ \alpha_1 \leq 1$.
\end{remark}

\begin{remark}\label{sualpha}
From $(h_3)$ and $(h_5)$ it follows that 
$\alpha_2 \le 1$. 
\end{remark}

\begin{remark}\label{conbdd}
From $(h_5)$--$(h_6)$ it follows that 
\[
(\mu - \alpha_3) A(x,t,\xi)\ \ge\ \alpha_2\ a(x,t,\xi)\cdot \xi
\quad \hbox{a.e. in $\R^N$, for all $(t,\xi) \in \R\times\R^N$;}
\]
hence, if also $(h_2)$--$(h_3)$ hold, it is 
$\alpha_3 < \mu$. So, 
\begin{equation}\label{old1}
A(x,t,\xi)\ \ge\ \alpha_4\ a(x,t,\xi)\cdot \xi
\quad \hbox{a.e. in $\R^N$, for all $(t,\xi) \in \R\times\R^N$}
\end{equation}
with $\alpha_4 = \frac{\alpha_2}{\mu - \alpha_3} >0$. Moreover,
from \eqref{old1} and $(h_6)$ we have that
\begin{equation*}\label{old2}
\mu A(x,t,\xi) - a(x,t,\xi)\cdot \xi - A_t(x,t,\xi) t\
 \ge\ \alpha_3 \alpha_4\ a(x,t,\xi)\cdot \xi
\quad \hbox{a.e. in $\R^N$, for all $(t,\xi) \in \R\times\R^N$.}
\end{equation*}
\end{remark}
\begin{remark}
We note that from $(h_3)$-$(h_6)$ it is
\[
-(1-\alpha_2)a(x,t,\xi)\cdot\xi \leq A_t(x,t, \xi)t \leq (\mu - \alpha_3)A(x,t,\xi)\leq (\mu - \alpha_3)\eta_0 a(x,t,\xi)\cdot\xi
\]
which implies that
\begin{equation} \label{contr_At_1}
\left|A_t(x,t,\xi)t\right| \leq c  a(x,t,\xi) \cdot \xi
\end{equation}
with $c=\max\lbrace (\mu - \alpha_3)\eta_0, (1-\alpha_2) \rbrace$.
\end{remark}

Now, we are able to state our main existence result.

\begin{theorem}\label{ThExist}
Assume that $(h_0)$--$(h_9)$ and $(g_0)$--$(g_4)$ hold, then problem \eqref{euler} admits at least two weak nontrivial radial bounded solutions, one negative and one positive.
\end{theorem}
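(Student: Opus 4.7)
To obtain two solutions of opposite sign I would split the nonlinearity, setting
\[
g_+(x,t) = g(x,t)\,\chi_{\{t\ge 0\}},\qquad g_-(x,t) = g(x,t)\,\chi_{\{t\le 0\}},
\]
with primitives $G_\pm$, and working with the truncated functionals
\[
\J_\pm(u) \;=\; \int_{\R^N} A(x,u,\nabla u)\,dx \;+\; \frac1p\,|u|_p^p \;-\; \int_{\R^N} G_\pm(x,u)\,dx
\]
on $X_r$. Both $g_\pm$ still satisfy $(g_0)$--$(g_3)$, and $(g_4)$ is preserved by $g_+$, so by Proposition \ref{smooth1} we still have $\J_\pm \in C^1(X,\R)$. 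The plan is to minimize each $\J_\pm$ on $X_r$ via the Minimum Principle (Proposition \ref{Minimum Principle}) and show that the resulting minimizers have the expected sign. Symmetric criticality (applicable thanks to $(h_8)$ and $(g_2)$) converts radial critical points to critical points on $X$; since $g_\pm$ coincides with $g$ on the sign actually taken by the solution, these are weak solutions of \eqref{euler}.

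\textbf{Bounded below, negative infimum.} Using $(h_2)$, the bound \eqref{int1}, and Young's inequality with exponents associated to $1<q<p$, one obtains
\[
\J_\pm(u) \;\ge\; \alpha_0\,|\nabla u|_p^p \;+\; \tfrac{1}{2p}\,|u|_p^p \;-\; C,
\]
which gives coercivity in the $W$-norm and a uniform lower bound. To show $\inf_{X_r} \J_+ < 0$ I would use $(g_4)$ combined with $(g_3)$: for any $M>0$ there is $\delta>0$ with $g(x,t)\ge M\,t^{p-1}$ on $0<t\le\delta$, $|x|\le 1$, hence $G(x,t)\ge\tfrac{M}{p}\,t^p$ there. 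Testing $\J_+$ with $u_\varepsilon = \varepsilon\varphi$ for a nonzero nonnegative radial $\varphi \in C_c^\infty(B_1(0))$ and small $\varepsilon$, the $A$-term and the $\frac1p|u|_p^p$-term are both $O(\varepsilon^p)$ by $(h_1)$ with bounded coefficients (the $\Phi_i,\phi_i$ are continuous and $\varepsilon\varphi$ stays in a compact range), whereas the $G_+$-term is $\ge \frac{M}{p}\varepsilon^p |\varphi|_p^p$. Taking $M$ sufficiently large forces $\J_+(u_\varepsilon)<0$; the case of $\J_-$ is symmetric.

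\textbf{The $(wCPS)$ condition --- the main obstacle.} Given a $(CPS)_\beta$ sequence $(u_n) \subset X_r$, coercivity provides a uniform $W^{1,p}$-bound, and the compact radial Strauss embedding $W^{1,p}_r(\R^N) \hookrightarrow L^l(\R^N)$ for $p<l<p^*$ yields, along a subsequence, $u_n \rightharpoonup u$ in $W^{1,p}$, strongly in such intermediate $L^l$, and a.e.\ in $\R^N$. To upgrade this to $\|u_n-u\|_W\to 0$, as required by Definition \ref{defwCPS}(i), I would invoke the Boccardo--Murat--Puel-type convergence result announced as Lemma \ref{lemma_bmp} together with the strict monotonicity $(h_7)$, obtaining $\nabla u_n \to \nabla u$ a.e.\ and then strongly in $L^p(\R^N)$. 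The delicate point is that $(u_n)$ is not a priori bounded in $L^\infty$; here the vanishing rate in $(h_9)$, encoded in the exponents $\eta_1,\eta_2$ satisfying \eqref{eta1}--\eqref{eta2}, is precisely what keeps $A_t(x,u_n,\nabla u_n)\,v$ and $a(x,u_n,\nabla u_n)\!\cdot\!\nabla u_n$ equi-integrable near the origin (where radial functions may concentrate), while the Strauss radial decay controls their tails. One then passes to the limit in $\langle d\J_\pm(u_n),v\rangle \to 0$ for radial $v\in X_r$ and concludes that $d\J_\pm(u)=0$ and $\J_\pm(u)=\beta$. I expect this verification to be the true technical core of the argument.

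\textbf{Sign of the minimizers.} By Proposition \ref{Minimum Principle} there exists $u_+ \in X_r$ with $\J_+(u_+) = \inf_{X_r}\J_+ < 0$, so $u_+\not\equiv 0$. Testing the identity $\langle d\J_+(u_+),v\rangle=0$ with the negative part $v := \min\{u_+,0\} \in X_r$ and observing that $g_+(x,u_+)=0$ wherever $u_+<0$ yields
\[
\int_{\{u_+<0\}}\!\!\bigl[a(x,u_+,\nabla u_+)\!\cdot\!\nabla u_+ + A_t(x,u_+,\nabla u_+)\,u_+\bigr]\,dx \;+\; \int_{\{u_+<0\}} |u_+|^p\,dx \;=\; 0.
\]
By $(h_5)$ and $(h_3)$ the bracket is nonnegative, forcing $\meas\{u_+<0\}=0$, i.e.\ $u_+\ge 0$ a.e. The analogous argument applied to $\J_-$, testing with $\max\{u_-,0\}$, produces a nontrivial radial bounded solution $u_- \le 0$, completing the proof of Theorem \ref{ThExist}.
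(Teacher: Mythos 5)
Your overall architecture matches the paper's: minimize truncated functionals via the Minimum Principle, obtain a negative infimum from $(g_4)$ by testing with a small nonnegative radial bump supported in $B_1(0)$ (the paper uses the first eigenfunction of $-\Delta_p$ there, but any such bump works), and kill the negative part of the minimizer of $\J_+$ by testing $d\J_+(u)=0$ against $\min\{u,0\}$ and using $(h_5)$ and $(h_3)$. Those parts are correct and essentially identical to the paper's.

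The gap is in the $(wCPS)$ verification, which you rightly call the technical core but whose mechanism you describe in a way that would fail as stated. Lemma \ref{lemma_bmp} carries the hypothesis \eqref{cont_un}, a uniform $L^\infty$ bound on the sequence, and a $(CPS)_\beta$--sequence has no such bound a priori; so you cannot invoke the Boccardo--Murat--Puel convergence result directly on $(u_n)_n$. The paper's route is: (i) prove that the weak limit $u$ itself lies in $L^\infty(\R^N)$ --- outside $B_1(0)$ by the Radial Lemma, inside $B_1(0)$ by the Ladyzhenskaya--Ural'tseva--type Lemma \ref{tecnico} applied to the level sets of $u$ (this is Proposition \ref{tecnico3}, and it is where $(g_3)$ enters); (ii) replace $u_n$ by the truncations $T_ku_n$ with $k>\max\{|u|_\infty,\beta_0\}$ and show that $\J(T_ku_n)\to\beta$ and $\|d\J(T_ku_n)\|_{X_r'}\to 0$, so that Lemma \ref{lemma_bmp} is applied to a uniformly bounded sequence. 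Your substitute explanation --- that $(h_9)$ yields equi-integrability ``near the origin, where radial functions may concentrate'' --- inverts the actual roles: $(h_9)$, combined with the decay $|u_n(x)|\le C\|u_n\|_W|x|^{-\theta}$ from the Radial Lemma, controls the integrals over the exterior of a large ball, where $|u_n|$ is uniformly small and $\Phi_1,\Phi_2$ can be compared with powers of $|u_n|$; the possible concentration near the origin is precisely what the $L^\infty$ estimate and the truncation $T_k$ are there to tame. Without these two ingredients your $(wCPS)$ step does not close. (A smaller point, shared with the paper: $(g_4)$ is a one-sided condition as $t\to 0^+$, so the claim that the negative-infimum argument for $\J_-$ is ``symmetric'' needs either a corresponding assumption on $g$ as $t\to 0^-$ or a separate justification.)
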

We will prove Theorem \ref{ThExist} by applying Proposition \ref{Minimum Principle} to the functional $\J$ introduced in \eqref{ediff}. To this aim, the following results will be useful in the following.
\begin{proposition} \label{prop.4.4}

Assume that conditions $(h_0)$--$(h_2)$ and $(g_0)$--$(g_1)$   hold. Then, some positive constants $b_1, b_2 >0$ exist such that
\[
\J(u) \geq b_1 \|u\|_W^p - b_2 \|u\|_W^q \quad \mbox{ for any } u \in X.
\]
Hence, functional $\J$ is bounded from below, i.e., a constant $\alpha \in \R$ exists such that $$\J(u) \geq \alpha \mbox{ for any } u\in X, \mbox{ with } \alpha= \displaystyle{\min_{s\geq 0}\left(b_1 s^{p}-b_2 s^{q}\right)}. $$
\end{proposition}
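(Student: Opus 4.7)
The plan is to bound $\J(u)$ from below term by term, using coercivity of the leading part through $(h_2)$ and controlling the lower-order integral $\int G(x,u)\,dx$ via hypothesis $(g_1)$ and Hölder's inequality. Once the inequality $\J(u) \ge b_1\|u\|_W^p - b_2\|u\|_W^q$ is established, boundedness from below follows because $q < p$, so the one-variable function $s \mapsto b_1 s^p - b_2 s^q$ attains a (finite) minimum on $[0,+\infty)$.

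First, fix any $u \in X$. Applying $(h_2)$ pointwise and integrating over $\R^N$ yields
\[
\int_{\R^N} A(x,u,\nabla u)\,dx \ge \alpha_0\,|\nabla u|_p^p .
\]
Combining this with the explicit $\frac{1}{p}|u|_p^p$ term in the definition \eqref{ediff} of $\J$, I obtain
\[
\int_{\R^N} A(x,u,\nabla u)\,dx + \frac{1}{p}\int_{\R^N} |u|^p\,dx \ \ge\ \alpha_0\,|\nabla u|_p^p + \frac{1}{p}\,|u|_p^p \ \ge\ b_1\,\|u\|_W^p,
\]
where $b_1 := \min\{\alpha_0,\,1/p\} > 0$ and I used the definition $\|u\|_W^p = |\nabla u|_p^p + |u|_p^p$.

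Next, to control the nonlinear term, I invoke Remark \ref{remG}: from \eqref{alto3} together with Hölder's inequality (with conjugate exponents $\frac{p}{p-q}$ and $\frac{p}{q}$, exactly as already done in \eqref{int1}), I get
\[
0 \ \le\ \int_{\R^N} G(x,u)\,dx \ \le\ \frac{1}{q}\,|\eta|_{\frac{p}{p-q}}\,|u|_p^q \ \le\ \frac{1}{q}\,|\eta|_{\frac{p}{p-q}}\,\|u\|_W^q,
\]
since $|u|_p \le \|u\|_W$. Setting $b_2 := \frac{1}{q}\,|\eta|_{\frac{p}{p-q}}$, which is finite by $(g_1)$ as $\eta \in L^{p/(p-q)}(\R^N)$, and combining with the previous estimate gives the claimed inequality
\[
\J(u) \ \ge\ b_1\,\|u\|_W^p - b_2\,\|u\|_W^q.
\]

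For the second assertion, I study the real function $\varphi(s) = b_1 s^p - b_2 s^q$ on $[0,+\infty)$. Since $p > q > 1$ and $b_1, b_2 > 0$, we have $\varphi(0)=0$, $\varphi(s) \to +\infty$ as $s \to +\infty$, and $\varphi$ is continuous, hence $\varphi$ attains its infimum on $[0,+\infty)$ at some finite point; call this minimum $\alpha \in \R$ (necessarily $\alpha \le 0$). Then for every $u \in X$, setting $s = \|u\|_W \ge 0$,
\[
\J(u) \ \ge\ \varphi(\|u\|_W) \ \ge\ \alpha,
\]
which proves boundedness from below. I do not foresee any serious obstacle: everything reduces to plugging in the Carathéodory bounds $(h_2)$ and $(g_1)$, Hölder's inequality, and the elementary one-variable minimization of $b_1 s^p - b_2 s^q$.
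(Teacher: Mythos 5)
Your proof is correct and follows essentially the same route as the paper: apply $(h_2)$ to get coercivity of the leading terms with $b_1=\min\{\alpha_0,1/p\}$, control $\int_{\R^N}G(x,u)\,dx$ via \eqref{int1} with $b_2=\frac1q|\eta|_{\frac{p}{p-q}}$, and minimize $s\mapsto b_1s^p-b_2s^q$ using $q<p$. Your write-up is in fact slightly more explicit than the paper's about why the one-variable minimum is finite.
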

\begin{proof}

From $(h_2)$ and \eqref{int1} we have
\[
\begin{split}
\J(u) &= \int_{\R^N} A(x,u,\nabla u)dx + \frac1p\ \int_{\R^N} |u|^p dx -\int_{\R^N} G(x,u) dx \\
& \geq {\alpha_0} \int_{\R^N}|\nabla u|^p dx + \frac{1}{p}\int_{\R^N} |u|^p dx - \frac{1}{q} {\vert \eta \vert}_\frac{p}{p-q}{\vert u \vert}_p^{q}\\
& \geq b_1 \|u\|_W^p-b_2 |u\|_W^q
\end{split}
\]
where $b_1=\min\{\alpha_0, \frac{1}{p}\}$ and $b_2=\frac{1}{q}{\vert \eta \vert}_\frac{p}{p-q}$.
  \qedhere
\end{proof}

\begin{lemma} \label{lemma_bmp}
Assume that  $(h_0)$, $(h_1)$, $(h_3)$ and $(h_6)$ hold. Let $(u_n)_n \subset X, u \in X$ be such that
\begin{equation} \label{conw_un}
u_n\rightharpoonup u \quad\mbox{ weakly in } W^{1, p}(\R^N),
\end{equation}
\begin{equation} \label{conqo_un}
u_n\to u \quad\mbox{  a.e. in } \R^N,
\end{equation}
\begin{equation} \label{cont_un}
\ |u_n|_\infty \leq M \quad \mbox{ for all $n \in \N$}
\end{equation}
and
\begin{equation} \label{cont_a}
\int_{\R^N} \left[a(x,u_n,\nabla u_n)-a(x,u_n, \nabla u) \right] \cdot \nabla (u_n-u) \to 0.
\end{equation}
Then,
\begin{equation} \label{cons_un}
\int_{\R^N} |\nabla u_n|^{p} dx\to \int_{\R^N} |\nabla u|^{p} dx \quad\mbox{ as $n \to +\infty$}.
\end{equation}
\end{lemma}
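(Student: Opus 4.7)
My plan is a transposition of the Boccardo--Murat--Puel (BMP) a.e.-convergence argument to the unbounded setting, with the uniform $L^\infty$-bound \eqref{cont_un} compensating for the lack of global compactness. I would proceed in three steps: (a) show that the nonnegative ``monotonicity remainder'' $e_n$ tends to $0$ a.e.; (b) deduce $\nabla u_n \to \nabla u$ a.e.\ pointwise; (c) upgrade this to \eqref{cons_un} via a Vitali-type argument based on the coercivity $(h_3)$.

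\textbf{Step (a).} Set $e_n(x):=[a(x,u_n,\nabla u_n)-a(x,u_n,\nabla u)]\cdot\nabla(u_n-u)$. The monotonicity of $a(x,t,\cdot)=\nabla_\xi A(x,t,\cdot)$, inherited from the structural hypotheses on $A$, gives $e_n\ge 0$ a.e., and hypothesis \eqref{cont_a} says $e_n\to 0$ in $L^1(\R^N)$; so along a subsequence $e_n(x)\to 0$ a.e.

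\textbf{Step (b).} Fix $x\in\R^N$ at which $e_n(x)\to 0$, $u_n(x)\to u(x)$ (by \eqref{conqo_un}) and $|u_n(x)|\le M$ (by \eqref{cont_un}); set $\xi_n=\nabla u_n(x)$, $\xi=\nabla u(x)$, $t_n=u_n(x)$. Using $(h_3)$ and the growth $(h_1)$ with $\Phi_2,\phi_2$ uniformly bounded on $[-M,M]$,
\[
e_n(x)\ \ge\ \alpha_1|\xi_n|^p - C(1+|\xi_n|^{p-1})|\xi| - C(1+|\xi|^{p-1})|\xi_n-\xi|,
\]
which forces $(\xi_n)$ to be bounded (else the $\alpha_1|\xi_n|^p$ term would blow up and contradict $e_n(x)\to 0$). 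Along any cluster point $\xi_n\to\xi^{*}$, continuity of $a$ in $(t,\xi)$ passes $e_n(x)\to 0$ to $[a(x,u(x),\xi^{*})-a(x,u(x),\xi)]\cdot(\xi^{*}-\xi)=0$, and the strict monotonicity of $a$ in $\xi$ forces $\xi^{*}=\xi$. Hence $\nabla u_n\to\nabla u$ a.e.

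\textbf{Step (c).} The identity
\[
\int a(x,u_n,\nabla u_n)\cdot\nabla u_n\,dx = \int e_n\,dx + \int a(x,u_n,\nabla u_n)\cdot\nabla u\,dx + \int a(x,u_n,\nabla u)\cdot(\nabla u_n-\nabla u)\,dx
\]
shows the left-hand side converges to $\int a(x,u,\nabla u)\cdot\nabla u\,dx$: the first term vanishes by \eqref{cont_a}, the third vanishes because $\nabla u_n-\nabla u\rightharpoonup 0$ in $L^p(\R^N)$ while $a(x,u_n,\nabla u)\to a(x,u,\nabla u)$ strongly in $L^{p/(p-1)}(\R^N)$ (dominated convergence via $(h_1)$, \eqref{cont_un}, and the fact that $|u_n|^{p-1},|\nabla u|^{p-1}\in L^{p/(p-1)}$), and the second converges thanks to $a(x,u_n,\nabla u_n)\rightharpoonup a(x,u,\nabla u)$ in $L^{p/(p-1)}$ (from Step (b) and the growth $(h_1)$). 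Now $a(x,u_n,\nabla u_n)\cdot\nabla u_n$ is nonnegative by $(h_3)$, converges a.e., and has convergent integrals; Scheff\'e's lemma then yields $L^1$-convergence. Since $(h_3)$ gives $|\nabla u_n|^p\le \alpha_1^{-1} a(x,u_n,\nabla u_n)\cdot\nabla u_n$, the family $\{|\nabla u_n|^p\}$ is uniformly integrable, and Vitali's theorem combined with the a.e.\ convergence $|\nabla u_n|^p\to|\nabla u|^p$ delivers \eqref{cons_un}.

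\textbf{Main obstacle.} The key difference from BMP's bounded-domain lemma is the global integrability needed in Step (c): the $\Phi_2(t_n)|t_n|^{p-1}$ contribution in the growth of $a$ is not a priori in $L^{p/(p-1)}(\R^N)$, and it is precisely the bound \eqref{cont_un}---which turns $\Phi_2(u_n)$ into a uniform constant on the range of $u_n$---combined with $u_n\in L^p(\R^N)$, that rescues dominated convergence. This interplay between the $L^\infty$-bound and the growth $(h_1)$ is, I expect, the main content of the $\R^N$-adaptation vis-\`a-vis BMP's original argument.
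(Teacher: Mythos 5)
Your proposal follows essentially the same route as the paper's proof: the same nonnegative remainder $f_n\to 0$ in $L^1$ hence a.e.\ along a subsequence, the same pointwise boundedness-plus-cluster-point argument via $(h_1)$, $(h_3)$ and the strict monotonicity $(h_7)$ to get $\nabla u_n\to\nabla u$ a.e., and the same passage to $\int_{\R^N} a(x,u_n,\nabla u_n)\cdot\nabla u_n\,dx\to\int_{\R^N} a(x,u,\nabla u)\cdot\nabla u\,dx$ followed by a Scheff\'e/Brezis--Lieb step and the coercivity $(h_3)$ to conclude. The only cosmetic difference is in the endgame, where the paper invokes Brezis--Lieb, extracts an $L^1$ dominating function and applies Lebesgue's theorem, while you use Scheff\'e plus uniform integrability and Vitali; these are interchangeable.
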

\begin{proof}
We will use arguments similar to those ones used in bounded domains in \cite[Lemma 4.5]{MeMu_stampa} (see also \cite[Lemma 5]{BMP}).

We will prove that any subsequence of $(u_n)_n$ admits another subsequence verifying \eqref{cons_un} and then \eqref{cons_un} holds for all sequence $(u_n)_n$.

Let $f_n$ be defined by
\[
f_n=\left[a(x,u_n,\nabla u_n)-a(x,u_n, \nabla u)\right]\cdot \nabla (u_n-u)
\]
From $(h_6)$ it follows that $f_n \geq 0 \mbox{ a.e. in $\R^N$ }$ and from \eqref{cont_a} we have $f_n \to 0 \mbox{ in $L^1(\R^N)$. } $

Thus, from \cite[Theorem 4.9]{Br} a function $\bar{h} \in L^1(\R^N)$ and a subset $Z$ of $\R^N$ exist such that $\meas(Z)=0$ and,  up to a subsequence,  
\begin{equation} \label{cond_n}
f_n(x) \to 0 \quad \mbox{ and } \quad f_n(x) \leq \bar{h}(x) < \infty \quad \quad \mbox{ for all $x \in \R^N \setminus Z$, for all $n \in \N$}.
\end{equation}

Moreover, since $u \in X$ and \eqref{conqo_un}--\eqref{cont_un} hold we can assume that
\begin{equation}\label{4.12bis}
u_n(x) \to u(x), \quad |u(x)| < +\infty \quad \mbox{ and } \quad |\nabla u(x)| < +\infty, \quad \mbox{ for all $x \in \R^N \setminus Z$ }. 
\end{equation}

From $(h_1)$ and $(h_3)$ we also have
\[
\begin{split}
f_n(x) &\geq \alpha_1 \left[\vert \nabla u_n \vert^{p} + \vert \nabla u \vert^{p}  \right] -{\Phi}_{2}(u_n){\vert u_n \vert}^{p-1} \vert \nabla u \vert - {\phi}_{2}(u_n) {\vert \nabla u_n \vert}^{p -1} \vert \nabla u \vert\\
& - {\Phi}_{2}(u){\vert u \vert}^{p-1}  \vert \nabla u_n \vert - {\phi}_{2}(u) {\vert \nabla u \vert}^{p -1} \vert \nabla u_n \vert.
\end{split}
\]
Since ${\Phi}_{2}, {\phi}_{2}$ are continuous functions, by \eqref{cont_un}, \eqref{cond_n} and \eqref{4.12bis} we find that
\[
(\nabla u_n (x))_n \hbox{ is bounded for all $x \in \R^N \setminus Z.$ }
\]
Let ${\xi}^{*}(x)$ be a cluster point of $(\nabla u_n (x))_n$. We have $\vert{\xi}^{*}(x) \vert < \infty$ and, since $f_n(x) \to 0$ and $a$ is a Carath\'eodory function , it follows that
\begin{equation*} \label{cont_a1}
\left[a(x, u, {\xi}^{*})-a(x, u, \nabla u)\right] \cdot ({\xi}^{*} - \nabla u)=0,
\end{equation*}
hence $(h_7)$ implies that $\nabla u(x)= {\xi}^{*}(x) \mbox{ for all $x \in \R^N \setminus Z$.}$\\
From this, we deduce that $\nabla u_n(x)$ converges to $\nabla u(x)$ without passing to subsequence. Hence,
\begin{equation} \label{5.20}
\nabla u_n(x) \to \nabla u(x) \quad \mbox{ for all $x \in \R^N \setminus Z.$ }
\end{equation}
Thus, from $(h_0)$, \eqref{4.12bis} and \eqref{5.20} we have that
\[
a(x,u_n(x), \nabla u_n(x)) \to a(x, u(x),\nabla u(x)) \quad \mbox{ for all $x \in \R^N \setminus Z$ }
\]
and then
\begin{equation} \label{5.23}
a(x,u_n,\nabla u_n) \cdot \nabla u_n \to a(x,u,\nabla u) \cdot \nabla u \quad \mbox{ a.e. in $\R^N$}.
\end{equation}

Now, from $(h_3)$ it is
\begin{equation} \label{5.22}
a(x,u_n,\nabla u_n) \cdot \nabla u_n \geq 0 \quad \mbox{ a.e. in $\R^N$ }.
\end{equation}

From \eqref{cont_un} and  $(h_1)$ we obtain that
\begin{equation} \label{cont_a2}
\vert a(x,u_n,\nabla u_n) \vert \leq c \left({\vert  \nabla u_n \vert}^{p -1} +{\vert u_n \vert}^{p-1}\right).
\end{equation}

Since \eqref{conw_un} holds, $u_n$ is bounded in $W^{1,p}(\R^N)$, thus from \eqref{cont_a2} the sequence $(a(x,u_n,\nabla u_n))_n$ is bounded in $L^{\frac{p}{p-1}}(\R^N)$, hence, up to subsequences, weakly converges to $a(x,u,\nabla u).$\\
It follows that
\[
\int_{\R^N} a(x,u_n,\nabla u_n) \cdot \nabla u dx \to \int_{\R^N} a(x,u,\nabla u) \cdot \nabla u dx.
\]
In a similar way, we prove that
\[
\int_{\R^N} a(x,u_n,\nabla u) \cdot \nabla u dx \to \int_{\R^N} a(x,u,\nabla u) \cdot \nabla u dx.
\]
Hence, from \eqref{cont_a} we finally find that
\begin{equation} \label{cont_a3}
\int_{\R^N} a(x,u_n,\nabla u_n) \cdot \nabla u_n dx \to \int_{\R^N} a(x,u,\nabla u) \cdot \nabla u dx.
\end{equation}
Now, we set
\[
y_n = a(x,u_n,\nabla u_n) \cdot \nabla u_n \quad \mbox{ and } \quad y = a(x,u,\nabla u) \cdot \nabla u.
\]
So, from \eqref{5.22}, \eqref{5.23}, $(h_1)$  and \eqref{cont_a3} we obtain that
\begin{equation*} 
y_n \geq 0, \quad \quad y_n \to y \quad \mbox{ a.e. in $\R^N$}, \quad \quad y \in L^1(\R^N), \quad \quad  \int_{\R^N} y_n dx \to \int_{\R^N} y dx.
\end{equation*}
From Brezis-Lieb's Lemma (see \cite{Br}) it results
\begin{equation*} \label{conv_a1}
a(x,u_n,\nabla u_n) \cdot \nabla u_n \to a(x,u,\nabla u) \cdot \nabla u \mbox{ in $L^1(\R^N)$, } 
\end{equation*}
hence, using again \cite[Theorem 4.9]{Br} a function $H \in L^1(\R^N)$ exists such that
\begin{equation} \label{cont_a4}
 a(x,u_n,\nabla u_n) \cdot \nabla u_n  \leq H(x) \mbox{ a.e. in $\R^N$. }
\end{equation}
Moreover, from $(h_3)$ and \eqref{cont_a4} we have that 
\[
\alpha_1\left(\vert \nabla u_n \vert^p \right) \leq a(x,u_n,\nabla u_n) \cdot \nabla u_n \leq H(x),
\]
thus, \eqref{cons_un} follows from \eqref{5.20} and Lebesgue's Convergence Theorem.
\end{proof} 

\begin{lemma}\label{tecnico2} 
Assume that $g(x,t)$ satisfies conditions $(g_0)$--$(g_1)$, with  $1 < q < p $, and consider $(w_n)_n$, $(v_n)_n \subset X$ and $v, w \in X$ such that
\begin{equation}\label{suglim1}
\|w_n\|_W \le M_1 \quad \hbox{for all $n \in \N$,} \qquad
w_n \to w \quad\hbox{a.e. in $\R^N$,}
\end{equation}
and
\begin{equation}\label{suglim2}
\|v_n\|_W \le M_2 \quad \hbox{for all $n \in \N$,}\qquad 
v_n \to 0 \quad\hbox{a.e. in $\R^N$,}
\end{equation}
for some constants $M_1$, $M_2 > 0$.
Then,
\begin{equation*}\label{suglim}
\lim_{n\to+\infty} \int_{\R^N} g(x,w_n) v_n dx\ =\  0.
\end{equation*}
\end{lemma}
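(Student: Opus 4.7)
The plan is to split the integral at a large radius, control the tail via the integrability of $\eta$, and use compactness of the Sobolev embedding on the ball.

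First, I would apply the growth bound from $(g_1)$ in the form $|g(x,t)| \le \eta(x) |t|^{q-1}$ (see Remark \ref{remG}) to obtain
\[
\left|\int_{\R^N} g(x,w_n) v_n\, dx\right| \le \int_{\R^N} \eta(x) |w_n|^{q-1} |v_n|\, dx.
\]
Then, for $R>0$, I split this integral over $B_R(0)$ and its complement $B_R^c$. Using H\"older's inequality with the three conjugate exponents $\tfrac{p}{p-q}$, $\tfrac{p}{q-1}$, $p$ (which sum correctly since $\tfrac{p-q}{p}+\tfrac{q-1}{p}+\tfrac{1}{p}=1$), I get for any measurable set $E \subset \R^N$
\[
\int_{E} \eta(x) |w_n|^{q-1} |v_n|\, dx \le \|\eta\|_{L^{p/(p-q)}(E)}\, |w_n|_p^{q-1}\, |v_n|_p.
\]

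The tail estimate: since $\eta \in L^{p/(p-q)}(\R^N)$, the absolute continuity of the Lebesgue integral gives $\|\eta\|_{L^{p/(p-q)}(B_R^c)} \to 0$ as $R \to +\infty$. Combined with the uniform bounds $|w_n|_p \le M_1$ and $|v_n|_p \le M_2$ from \eqref{suglim1}--\eqref{suglim2}, for any $\varepsilon>0$ I can fix $R=R_\varepsilon$ large enough that
\[
\int_{B_R^c} \eta(x) |w_n|^{q-1} |v_n|\, dx < \frac{\varepsilon}{2} \quad \text{for every } n \in \N.
\]

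The local piece: on $B_R(0)$ I use the bound
\[
\int_{B_R} \eta(x) |w_n|^{q-1} |v_n|\, dx \le \|\eta\|_{L^{p/(p-q)}(\R^N)}\, M_1^{q-1}\, |v_n|_{L^p(B_R)},
\]
and I claim $|v_n|_{L^p(B_R)} \to 0$. Indeed, $(v_n)_n$ is bounded in $W^{1,p}(\R^N)$, and since $1<p\le N$, the compact embedding $W^{1,p}(B_R) \hookrightarrow L^p(B_R)$ applies: from any subsequence one can extract a further subsequence strongly convergent in $L^p(B_R)$, whose limit must coincide with the a.e. limit, namely $0$. A standard Urysohn-type subsequence argument then promotes this to convergence of the full sequence. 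Hence for $n$ sufficiently large the $B_R$-contribution is also below $\varepsilon/2$, and the conclusion follows by letting $\varepsilon \to 0$.

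The main (mild) obstacle is justifying $|v_n|_{L^p(B_R)} \to 0$ without assuming weak convergence of $(v_n)_n$ in $W^{1,p}(\R^N)$; only boundedness and a.e. convergence are given. Reflexivity of $W^{1,p}$ (since $p>1$) plus the a.e. identification of the weak limit, combined with the Rellich--Kondrachov compactness on $B_R$, resolve this. Everything else is bookkeeping with H\"older's inequality.
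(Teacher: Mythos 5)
Your argument is correct, but it follows a genuinely different route from the paper's. The paper first checks that $g(x,w_n)v_n \to 0$ a.e.\ (using $w_n \to w$ a.e.\ and $v_n\to 0$ a.e.), then establishes uniform integrability of the sequence $(g(x,w_n)v_n)_n$ in two pieces --- smallness on sets of small measure via the absolute continuity of $\int |\eta|^{p/(p-q)}$, and smallness outside a set of finite measure via the tightness of the same integral --- and concludes by Vitali's convergence theorem. You instead split at a fixed large radius $R$: the tail over $B_R^c$ is handled by the same tightness of $\eta$ (uniformly in $n$, using only $|w_n|_p\le M_1$, $|v_n|_p\le M_2$), while the piece on $B_R$ is killed by showing $|v_n|_{L^p(B_R)}\to 0$ through reflexivity of $W^{1,p}$, the Rellich--Kondrachov compact embedding on the ball, identification of the limit with the a.e.\ limit $0$, and the standard subsequence-of-subsequences argument. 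Both proofs rest on the same three-exponent H\"older estimate; what yours buys is that it never uses the hypothesis $w_n\to w$ a.e.\ (only the $W^{1,p}$ bound on $w_n$), so it proves a marginally stronger statement, at the price of invoking compact Sobolev embeddings and reflexivity, which the paper's Vitali argument avoids entirely. Your justification of $|v_n|_{L^p(B_R)}\to 0$ is the one delicate point and you handle it correctly.
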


\begin{proof}
From \eqref{suglim1}, \eqref{suglim2} and $(g_0)$ we have
\begin{equation}\label{qo}
g(x,w_n) v_n \to 0 \quad \hbox{a.e. in $\R^N$.}
\end{equation}
Moreover, from \eqref{int2} and by applying again \eqref{suglim1} and \eqref{suglim2}, it follows that
\[
\begin{split}
\int_{\R^N} \vert g(x,w_n) v_n \vert dx &\leq \vert \eta \vert_{\frac{p}{p-q}} |w_n|_p^{q-1}|v_n|_p\\
&\leq \vert \eta \vert_{\frac{p}{p-q}}\|w_n\|_W^{q-1} \|v_n\|_W \\
& \leq M_1^{q-1}M_2 \vert \eta \vert_{\frac{p}{p-q}}.
\end{split}
\]
As $\eta \in L^{\frac{p}{p-q}}({\R}^{N})$, from the absolute continuity of the Lebesgue's integral it follows that for any $\epsilon>0$ taking $\epsilon'= \left(\frac{\epsilon}{M_1^{q-1}M_2 }\right)^{\frac{p}{p-q}}$ there exists $ \delta_{\epsilon}>0$ such that
\[
\int_{A} \vert \eta \vert^{\frac{p}{p-q}} dx  \leq \epsilon'
\]
for all  measurable set $A$ with $\meas(A) < \delta_{\epsilon}$.

Thus, it follows that
\[
\int_{A} \vert g(x,w_n) v_n \vert dx \leq \epsilon
\]
for all $n \in \N$ and for all measurable set $A$  with $\meas(A) < \delta_{\epsilon}$.

On the other hand, using again the assumption $\eta \in L^{\frac{p}{p-q}}({\R}^{N})$, for any $\epsilon>0$ there exists a measurable set $B_{\epsilon}$ such that
\[
\meas(B_{\epsilon}) < \infty \ \mbox{ and } \int_{B_{\epsilon}^c} \vert \eta \vert^{\frac{p}{p-q}} dx < \epsilon'
\]
where $B_{\epsilon}^c = \R^N \setminus B_{\epsilon}$, and therefore
\[
\int_{B_{\epsilon}^c} \vert g(x,w_n)v_n \vert < \epsilon.
\]

As the sequence $\{g(x,w_n)v_n\}_n$ is uniformly integrable and verifies \eqref{qo}, the thesis follows from the Vitali's Theorem. 
\end{proof}

From now on, in order to overcame the lack of compactness of the problem we reduce to work in the space of radial functions which is a natural constraint if the problem is radially invariant (see \cite{Pa}).
Thus, in our setting, we consider the space
\[
X_r := W_r^{1,p}(\R^N) \cap L^\infty(\R^N)
\]
endowed with norm $\|\cdot\|_X$, which has dual space $(X'_r,\|\cdot\|_{X'_r})$.

The following results hold.

\begin{lemma}[Radial Lemma]\label{radiallemma}
If $N\geq 3$ and $p > 1$, every radial function $u\in W_r^{1,p}(\R^N)$ is almost everywhere
equal to a function $U(x)$, continuous for $x \ne 0$, such that
\begin{equation}
|U(x)|\ \leq\ C \frac{\|u\|_{W}}{|x|^{\vartheta}} \quad \hbox{for all $x \in \R^N$ with $|x| \geq 1$,}
\label{lemmaradiale}
\end{equation}
for suitable constants $C$, $\vartheta > 0$ depending only on $N$ and $p$. More precisely, we can take 
\begin{equation}\label{theta}
\theta= \displaystyle \max \lbrace \frac{N-2}{p}, \frac{N-p}{p} \rbrace
\end{equation}
\end{lemma}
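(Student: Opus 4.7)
The plan is to establish the stronger bound $|U(x)| \leq C \|u\|_W / |x|^{(N-1)/p}$ for all $x \neq 0$, which implies the claimed inequality on $\{|x|\geq 1\}$ because $(N-1)/p \geq \max\{(N-2)/p,(N-p)/p\}=\theta$ for every $p\geq 1$.

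I would first reduce to the case $u \in C_c^\infty(\R^N)$ radial, using the density (via radial mollification, which preserves radial symmetry) of such functions in $W_r^{1,p}(\R^N)$. Writing $\tilde u(r):=u(re)$ for a fixed unit vector $e$, the function $\tilde u$ has compact support in $[0,+\infty)$, so the fundamental theorem of calculus gives, for every $r>0$,
\[
|\tilde u(r)|^p \ =\ -\int_r^{+\infty}\frac{d}{ds}|\tilde u(s)|^p\, ds \ \leq\ p\int_r^{+\infty}|\tilde u(s)|^{p-1}|\tilde u'(s)|\, ds,
\]
and by H\"older's inequality with conjugate exponents $p/(p-1)$ and $p$,
\[
|\tilde u(r)|^p \ \leq\ p\left(\int_r^{+\infty}|\tilde u(s)|^p\, ds\right)^{\frac{p-1}{p}}\left(\int_r^{+\infty}|\tilde u'(s)|^p\, ds\right)^{\frac{1}{p}}.
\]

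The core step is the radial weight trick: since $s\geq r$ implies $s^{N-1}/r^{N-1}\geq 1$,
\[
\int_r^{+\infty}|\tilde u(s)|^p\, ds \ \leq\ \frac{1}{r^{N-1}}\int_0^{+\infty}|\tilde u(s)|^p s^{N-1}\, ds \ =\ \frac{|u|_p^p}{\omega_{N-1}\, r^{N-1}},
\]
where $\omega_{N-1}$ is the surface measure of the unit sphere; an analogous estimate holds for $|\tilde u'|^p$ since $|\nabla u(x)|=|\tilde u'(|x|)|$ almost everywhere. Combining these yields
\[
|\tilde u(r)|\ \leq\ \left(\frac{p}{\omega_{N-1}}\right)^{\frac{1}{p}}\frac{|u|_p^{(p-1)/p}|\nabla u|_p^{1/p}}{r^{(N-1)/p}}\ \leq\ C\,\frac{\|u\|_W}{r^{(N-1)/p}}.
\]

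To close the argument for a general $u\in W_r^{1,p}(\R^N)$, I would approximate by radial $u_n\in C_c^\infty(\R^N)$ with $u_n\to u$ in $W^{1,p}(\R^N)$; the uniform pointwise bound just obtained makes $(u_n)_n$ Cauchy in $C(\{|x|\geq\varepsilon\})$ for every $\varepsilon>0$, so its almost everywhere limit $U$ is continuous on $\R^N\setminus\{0\}$ and inherits the same estimate. Restricting to $|x|\geq 1$ and using $(N-1)/p\geq\theta$ then yields \eqref{lemmaradiale}. I do not foresee a substantial obstacle: the only routine concerns are the density of smooth radial approximations and the uniform convergence away from the origin.
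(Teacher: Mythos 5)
Your argument is correct, and it takes a genuinely different route from the paper. The paper proves the lemma by citation, splitting into cases: for $p \geq 2$ it invokes \cite[Lemma A.III]{BL} (adapted as in \cite{Pi}, \cite{CS2011}) to get the exponent $\frac{N-2}{p}$, and for $1<p<N$ it invokes \cite[Lemma 4.7]{CS_radiale} to get $\frac{N-p}{p}$, then takes the maximum. You instead give a single self-contained Strauss--Lions-type computation: the identity $|\tilde u(r)|^p = -\int_r^{+\infty}\frac{d}{ds}|\tilde u(s)|^p\,ds$, H\"older, and the monotonicity $s^{N-1}\geq r^{N-1}$ for $s \geq r$ yield the unified and in fact sharper exponent $\frac{N-1}{p}$, which dominates $\theta=\max\{\frac{N-2}{p},\frac{N-p}{p}\}$ precisely because $p>1$, so the stated bound follows on $\{|x|\geq 1\}$. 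The steps are all sound: $t\mapsto|t|^p$ is $C^1$ for $p>1$, $|\nabla u(x)|=|\tilde u'(|x|)|$ for radial $u$, the interpolation $|u|_p^{(p-1)/p}|\nabla u|_p^{1/p}\leq\|u\|_W$ holds, and the density of radial $C_c^\infty$ functions in $W_r^{1,p}(\R^N)$ (radial mollification plus radial cut-offs) together with the linearity of the estimate makes the approximating sequence uniformly Cauchy away from the origin, so the continuous representative $U$ exists and inherits the bound; the only point worth making explicit is that $U=u$ a.e.\ because a subsequence of $(u_n)_n$ converges to $u$ almost everywhere. What each approach buys: the paper's version is two lines long but leans on external results and a case analysis; yours is self-contained, avoids the case split entirely, and delivers a quantitatively stronger decay rate that would in fact slightly relax the lower bounds \eqref{eta1}--\eqref{eta2} in $(h_9)$ where $\theta$ is used.
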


\begin{proof}
Firstly, we recall that by classical results every function $u\in W_r^{1,p}(\R^N)$ can be assumed 
to be continuous at all points except the origin (see \cite{Br}).\\
Now, if $p \ge 2$ the proof of \eqref{lemmaradiale} with $\vartheta= \frac{N -2}{p}$, 
 follows from  \cite[Lemma A.III]{BL} 
but reasoning as in \cite[Lemma 3.5]{Pi} (see also \cite[Lemma 3.1]{CS2011}).\\
In the case $1 < p < N$, the proof follows as in \cite[Lemma 4.7]{CS_radiale}) with $\theta= \frac{N-p}{p}$. Then, \eqref{lemmaradiale} holds with $\theta$ as in \eqref{theta}.
\end{proof}

\begin{lemma}\label{immergo3}
If $p > 1$ then the following compact embeddings hold:
\[
 W_r^{1,p}(\R^N)  \hookrightarrow\hookrightarrow L^l(\R^N) \qquad\hbox{for any $p < l < p^*$.}
\]
\end{lemma}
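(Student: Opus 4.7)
The plan is to follow the classical Strauss-type argument: split the integral into a ball $B_R(0)$, where compactness is already available from the Rellich--Kondrachov theorem, plus a tail $B_R^c$, where the pointwise radial decay provided by Lemma \ref{radiallemma} forces uniform smallness as $R \to +\infty$.

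More precisely, fix $l \in (p, p^*)$ and let $(u_n)_n \subset W_r^{1,p}(\R^N)$ be a bounded sequence, say $\|u_n\|_W \le K$ for every $n \in \N$. Reflexivity of $W^{1,p}(\R^N)$ and the closedness of $W_r^{1,p}(\R^N)$ under weak limits give, up to a subsequence, $u_n \rightharpoonup u$ in $W^{1,p}(\R^N)$ with $u \in W_r^{1,p}(\R^N)$ and $\|u\|_W \le K$. Applying the classical Rellich--Kondrachov theorem on each ball $B_R(0)$, and extracting a standard diagonal subsequence as $R \to +\infty$, we may also assume $u_n \to u$ strongly in $L^l(B_R(0))$ for every $R>0$, and $u_n \to u$ a.e.\ in $\R^N$.

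To control the tail, I would write, for $R \ge 1$,
\[
\int_{B_R^c} |u_n - u|^l \, dx \ \le\ \Big(\esssup_{|x| \ge R} |u_n(x) - u(x)|\Big)^{l-p} \int_{B_R^c} |u_n - u|^p \, dx,
\]
which makes sense precisely because $l > p$. By Lemma \ref{radiallemma} applied to $u_n - u \in W_r^{1,p}(\R^N)$ (and to its continuous representative on $\R^N \setminus \{0\}$),
\[
\esssup_{|x| \ge R} |u_n(x) - u(x)| \ \le\ \frac{C \, \|u_n - u\|_W}{R^\vartheta} \ \le\ \frac{2CK}{R^\vartheta},
\]
while $\int_{B_R^c} |u_n - u|^p \, dx \le (2K)^p$. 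Consequently,
\[
\int_{B_R^c} |u_n - u|^l \, dx \ \le\ \frac{(2CK)^{l-p}(2K)^p}{R^{\vartheta(l-p)}},
\]
which tends to $0$ as $R \to +\infty$, uniformly with respect to $n$. Given $\eps > 0$, fix $R$ large enough to make this tail smaller than $\eps/2$; then use the already-established strong convergence $u_n \to u$ in $L^l(B_R(0))$ to make $\int_{B_R(0)} |u_n - u|^l \, dx < \eps/2$ for $n$ large. This yields $u_n \to u$ in $L^l(\R^N)$, proving the compact embedding.

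The only delicate point is the tail estimate, and it is precisely where the hypothesis $l > p$ is used: the decay rate $1/R^{\vartheta}$ from Lemma \ref{radiallemma} is not, by itself, integrable to the power $l$ over $B_R^c$, so one must gain integrability from the factor $|u_n - u|^p$ (bounded in $L^1(\R^N)$ uniformly in $n$) and spend only the excess $l-p$ on the pointwise decay. The assumption $l < p^*$ is used only through Rellich--Kondrachov on bounded domains, so no further work is needed there.
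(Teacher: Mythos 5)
Your proof is correct and self-contained: the splitting into $B_R(0)$ (handled by Rellich--Kondrachov, where $l<p^*$ is used) and the tail $B_R^c$ (handled via the decay of Lemma \ref{radiallemma} applied to $u_n-u$, where $l>p$ and $\vartheta>0$ are used) is exactly the classical Strauss-type argument. The paper itself does not write this out but simply defers to \cite[Theorem 3.2]{CS2011} and \cite[Lemma 4.8]{CS_radiale}, which rest on the same mechanism, so your proposal matches the intended proof in substance.
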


\begin{proof}
The proof is contained essentially in \cite[Theorem 3.2]{CS2011} (see also \cite[Lemma 4.8]{CS_radiale}). 
\end{proof}

\begin{remark} \label{smooth2}
Due to the assumptions $(h_8)$ and $(g_2)$, we can reduce to look for critical points of the restriction of $\J$
in \eqref{ediff} to $X_r$, which we still denote as $\J$ for simplicity (see \cite{Pa}).\\
We recall that Proposition \ref{smooth1} 
implies that functional $\J$ is $C^1$ on the Banach space $X_r$, too, if also $(h_0)$--$(h_1)$, $(g_0)$--$(g_1)$ hold. 
\end{remark}

\section{Proof of the main result} \label{secproof}

The aim of this section is to prove that $\J$ satisfies the {\sl $(wCPS)_\beta$}-condition in $X_r$ and then to apply Proposition \ref{Minimum Principle} to the functional $\J$ on $X_r$.

In order to prove the weak Cerami--Palais--Smale condition,
we need some preliminary lemmas.

Firstly, let us point out that, while if $p > N$ the two norms $\|\cdot\|_X$ and $\|\cdot\|_W$
are equivalent, if $p \le N$ sufficient conditions are required for the boundedness of
a $W^{1,p}$--function on bounded sets as in the following result.

\begin{lemma}\label{tecnico} 
Let $\Omega$ be an open bounded domain in $\R^N$ with boundary $\partial\Omega$, 
consider $p$, $r$ so that $1 < p \le r < p^*$, $p \le N$, and take $v \in W^{1,p}(\Omega)$.
If $\gamma >0$ and $k_0\in \N$ exist such that
\[
 k_0 \ge \esssup_{\partial\Omega} v(x)
\]
and
\[
\int_{\Omega^+_k}|\nabla v|^p dx \le \gamma\left(k^r\ \meas(\Omega^+_k) +
\int_{\Omega^+_k} |v|^r dx\right)\qquad\hbox{for all $k \ge k_0$,}
\]
with $\Omega^+_k = \{x \in \Omega: v(x) > k\}$, then $\displaystyle \esssup_{\Omega} v$
is bounded from above by a positive constant which can be chosen so
that it depends only on $\meas(\Omega)$, $N$, $p$, $r$, $\gamma$, $k_0$,
$|v|_{p^*}$ ($|v|_l$ for some $l > r$ if $p^* = +\infty$). 
Vice versa, if 
\[
- k_0 \le \essinf_{\partial\Omega} v(x)
\]
and inequality
\[
\int_{\Omega^-_k}|\nabla v|^p dx \le \gamma\left(k^r\ \meas(\Omega^-_k) +
\int_{\Omega^-_k} |v|^r dx\right)\qquad\hbox{for all $k \ge k_0$,} 
\]
holds with $\Omega^-_k = \{x \in \Omega: v(x) < - k\}$, then $\displaystyle \esssup_{\Omega}(-v)$ 
is bounded from above by a positive constant which can be
chosen so that it depends only on $\meas(\Omega)$, $N$, $p$, $r$,
$\gamma$, $k_0$, $|v|_{p^*}$ ($|v|_l$ for some $l > r$ if $p^* = +\infty$).
\end{lemma}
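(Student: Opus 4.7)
The plan is to apply a classical Stampacchia-type truncation argument. For each $k \geq k_0$, set $v_k := (v-k)^+$; since the trace of $v$ on $\partial\Omega$ satisfies $v \leq k_0 \leq k$, the truncation $v_k$ belongs to $W_0^{1,p}(\Omega)$, with $\nabla v_k = \chi_{\Omega^+_k}\nabla v$ almost everywhere. Apply the Sobolev embedding $W_0^{1,p}(\Omega) \hookrightarrow L^{p^*}(\Omega)$ (if $p=N$, replace $p^*$ by some $l > r$, which explains the alternative dependence on $|v|_l$ in the statement) to obtain a constant $S=S(N,p)$ with $|v_k|_{p^*}^p \le S^p |\nabla v_k|_p^p$. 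Combined with the hypothesis, this yields
\[
|v_k|_{p^*}^p\ \leq\ S^p \gamma \left(k^r \meas(\Omega_k^+) + \int_{\Omega_k^+} |v|^r dx\right).
\]

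Next, on $\Omega_k^+$ I would write $v = v_k + k$ and use $|v|^r \leq 2^{r-1}(v_k^r + k^r)$, then bound $\int_{\Omega_k^+} v_k^r dx$ via H\"older's inequality with conjugate exponents $p^*/r$ and $p^*/(p^*-r)$; this is exactly where the hypothesis $r < p^*$ enters. Writing $Y := |v_k|_{p^*}$ and $Z := \meas(\Omega_k^+)$ this gives an inequality of the form
\[
Y^p\ \leq\ C_1 k^r Z + C_2 Y^r Z^{1-r/p^*}
\]
with $C_1, C_2$ depending only on $N$, $p$, $r$, $\gamma$. By Chebyshev's inequality $Z \leq |v|_{p^*}^{p^*}/k^{p^*}$, so choosing $k_1 \ge k_0$ large (in terms of $N$, $p$, $r$, $\gamma$, $|v|_{p^*}$) makes the factor $C_2 Z^{1-r/p^*} |v|_{p^*}^{r-p}$ smaller than $1/2$ for $k \ge k_1$; since $Y \le |v|_{p^*}$, the second term on the right can be absorbed by the left-hand side, producing the clean estimate $Y \leq C\, k^{r/p} Z^{1/p}$.

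Finally, for any $h > k \ge k_1$ the elementary inequality $(h-k)^{p^*}\meas(\Omega_h^+) \le \int_{\Omega_h^+} v_k^{p^*} dx \le Y^{p^*}$ together with the previous estimate gives
\[
\meas(\Omega_h^+)\ \le\ \frac{C'\, k^{r p^*/p}}{(h-k)^{p^*}}\,\meas(\Omega_k^+)^{p^*/p}.
\]
Since $p^*/p > 1$, the standard Stampacchia iteration lemma applied to the non-increasing function $\phi(k) := \meas(\Omega_k^+)$ produces an explicit $d>0$, depending only on $\meas(\Omega)$, $N$, $p$, $r$, $\gamma$, $k_0$ and $|v|_{p^*}$, such that $\phi(k_1+d) = 0$; hence $\esssup_\Omega v \le k_1 + d$. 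The negative-sign statement follows at once by applying this conclusion to $-v$, which enjoys the symmetric hypotheses on $\Omega_k^-$ and $\partial\Omega$. The main technical obstacle is ensuring that the iteration exponent $\beta = p^*/p$ strictly exceeds $1$ \emph{while} the nonlinear term $Y^r$ is absorbable; both requirements hinge delicately on the strict inequality $r < p^*$ and on the a priori control of $|v|_{p^*}$ via Chebyshev.
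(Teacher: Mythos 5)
Your argument is mathematically sound, but note that the paper does not actually prove this lemma: its entire ``proof'' is a citation of \cite[Theorem II.5.1]{LU}, adapted as in \cite[Lemma 4.5]{CP2}. What you have written is a self-contained reconstruction of that cited result via the standard De Giorgi--Stampacchia truncation-and-iteration scheme, and the skeleton is correct: $v_k=(v-k)^+\in W_0^{1,p}(\Omega)$, Sobolev embedding plus the Caccioppoli-type hypothesis, absorption of the $Y^r$ term using $p\le r<p^*$ and Chebyshev, and finally the decay of $\phi(k)=\meas(\Omega_k^+)$. Two points deserve care if this is to stand on its own. First, the recursive inequality $\meas(\Omega_h^+)\le C'\,k^{rp^*/p}(h-k)^{-p^*}\meas(\Omega_k^+)^{p^*/p}$ has a numerator growing in $k$, so the textbook iteration lemma does not apply verbatim; one must either confine the levels to $[k_1,2k_1]$ (so that $k^{rp^*/p}\le (2k_1)^{rp^*/p}$) and invoke the variant of the lemma with a geometric factor $b^n$, or track the $k$-dependence explicitly. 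The iteration then closes because Chebyshev gives $\phi(k_1)\le |v|_{p^*}^{p^*}k_1^{-p^*}$ while the smallness threshold scales like $k_1^{-p^*(r-p)/(p^*-p)}$, and $k_1^{-p^*}$ beats this for $k_1$ large precisely because $r<p^*$ --- the same strict inequality you flagged for the absorption step. Second, both the absorption (where you use $Y^{r-p}\le |v|_{p^*}^{r-p}$, valid since $r\ge p$) and the Chebyshev bound presuppose $v\in L^{p^*}(\Omega)$ (or $L^l$ with $l>r$ when $p=N$); for a general open bounded $\Omega$ this does not follow from $v\in W^{1,p}(\Omega)$ alone, but it is implicit in the statement, since the final constant is allowed to depend on $|v|_{p^*}$, and it is automatic in the paper's application where $\Omega=B_1(0)$. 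With these provisos your proof is complete and, unlike the paper's, self-contained.
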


\begin{proof}
The proof follows from \cite[Theorem  II.5.1]{LU} but reasoning as in \cite[Lemma 4.5]{CP2}.
\end{proof}

By applying Lemma \ref{tecnico}, we will prove that the weak limit in $W^{1,p}_r(\R^N)$ of
a $(CPS)_\beta$--sequence has to be bounded in $\R^N$.

For simplicity, in the following proofs, when a sequence $(u_n)_n$ is involved,
we use the notation $(\eps_n)_n$ for any infinitesimal sequence depending only on $(u_n)_n$ while $(\eps_{k,n})_n$ for any infinitesimal sequence depending not only on $(u_n)_n$ but also on some fixed integer $k$. Moreover, $c$ denotes any strictly positive constant independent of $n$ which can change from line to line.

\begin{proposition}\label{tecnico3}
Let $1 < q < p$ and assume that $(h_0)$--$(h_3)$, $(h_5)$, $(h_8)$ and $(g_0)$--$(g_3)$ hold.
Then, taking any $\beta \in \R$ and a $(CPS)_\beta$--sequence $(u_n)_n \subset X_r$, 
it follows that $(u_n)_n$ is bounded in $W_r^{1,p}(\R^N)$
and a constant $\beta_0 > 0$ exists such that 
\begin{equation}\label{c5bis}
|u_n(x)| \le \beta_0 \quad \mbox{for a.e. $x \in \R^N$ such that $|x| \ge 1$ and for all $\ n\in \N$.}
\end{equation}
Moreover, there exists $u \in X_r$ such that, up to subsequences, 
\begin{eqnarray}
&&u_n \rightharpoonup u\ \hbox{weakly in $W^{1,p}_r(\R^N)$,}
\label{c2}\\
&&u_n \to u\ \hbox{strongly in $L^l(\R^N)$ for each $l \in ]p,p^*[$,}
\label{c3}\\
&&u_n \to u\ \hbox{a.e. in $\R^N$,}
\label{c4}
\end{eqnarray}
if $n\to+\infty$.
\end{proposition}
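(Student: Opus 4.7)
The conclusion splits into the boundedness of $(\|u_n\|_W)_n$, the exterior bound \eqref{c5bis}, the convergences \eqref{c2}--\eqref{c4}, and the $L^\infty$-bound on $u$. For the first point I would evaluate
\[
\mu\J(u_n)-\langle d\J(u_n),u_n\rangle=\mu\beta+\eps_n,
\]
where $\eps_n\to 0$ follows from $\|d\J(u_n)\|_{X'}\|u_n\|_X\to 0$, a consequence of the $(CPS)_\beta$-hypothesis. Using \eqref{ediff} and \eqref{ediff1}: $(h_6)$ together with Remark \ref{conbdd} and then $(h_3)$ bound $\int[\mu A-a\cdot\nabla u_n-A_t u_n]dx$ below by $\alpha_3\alpha_4\alpha_1|\nabla u_n|_p^p$; the term $(\mu/p-1)|u_n|_p^p$ is nonnegative because $\mu>p$; and $(g_1)$ with H\"older controls $\bigl|\int[\mu G-g(x,u_n)u_n]dx\bigr|$ by $c|\eta|_{p/(p-q)}|u_n|_p^q\le c\|u_n\|_W^q$. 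Collecting these with $q<p$ forces $\|u_n\|_W\le c$, and \eqref{c5bis} follows immediately from \eqref{lemmaradiale}. Reflexivity of $W^{1,p}_r(\R^N)$ then provides \eqref{c2} along a subsequence, Lemma \ref{immergo3} upgrades it to \eqref{c3}, and a further subsequence yields the a.e.~convergence \eqref{c4}; combined with \eqref{c5bis} it follows that $|u|\le\beta_0$ a.e.~on $\{|x|\ge 1\}$.

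It remains to bound $u$ on $B_1(0)$, which I would do by applying Lemma \ref{tecnico} to $u$ on $\Omega=B_2(0)$. The required integral inequality is produced from $(CPS)_\beta$ by using $v_n=(u_n-k)^+$ as a test function for $d\J(u_n)$ with $k\ge k_0:=\max\{\beta_0,1\}$: since $\|v_n\|_X\le\|u_n\|_X$, we have $\langle d\J(u_n),v_n\rangle=\eps_{k,n}$, and the choice $k\ge\beta_0$ forces $\Omega^+_{k,n}:=\{u_n>k\}\subset B_1(0)$. Expanding and rearranging gives
\[
\int_{\Omega^+_{k,n}}\!a\cdot\nabla u_n dx+\int_{\Omega^+_{k,n}}\!A_t(u_n-k)dx+\int_{\Omega^+_{k,n}}\!u_n^{p-1}(u_n-k)dx-\int_{\Omega^+_{k,n}}\!g(x,u_n)(u_n-k)dx=\eps_{k,n}.
\]
The central estimate is $A_t(u_n-k)=A_tu_n(1-k/u_n)\ge -(1-\alpha_2)\,a\cdot\nabla u_n$, obtained on $\Omega^+_{k,n}$ from $(h_5)$ and the fact that $1-k/u_n\in[0,1)$ there; together with $(h_3)$ it makes the sum of the first two summands at least $\alpha_1\alpha_2\int_{\Omega^+_{k,n}}|\nabla u_n|^p dx$. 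The third summand is nonnegative, and the fourth is controlled by $(g_1)$, $(g_3)$, and the inequality $u_n^q\le u_n^p$ on $\{u_n>k\ge 1\}$, producing
\[
\alpha_1\alpha_2\int_{\Omega^+_{k,n}}\!|\nabla u_n|^p dx\le M_\eta\int_{\Omega^+_{k,n}}\!|u_n|^p dx+\eps_{k,n}.
\]
Passing to the limit as $n\to+\infty$ for every $k\ge k_0$ with $\meas\{u=k\}=0$, via Rellich compactness on $B_1(0)$ for the right-hand side and the weak convergence $(u_n-k)^+\rightharpoonup(u-k)^+$ in $W^{1,p}(B_1(0))$ (whence $\int|\nabla(u-k)^+|^p\le\liminf\int|\nabla(u_n-k)^+|^p$) for the left, yields
\[
\int_{\{u>k\}}\!|\nabla u|^p dx\le \gamma\int_{\{u>k\}}\!|u|^p dx,
\]
first for a.e.~$k\ge k_0$, and then by monotone convergence in $k$ for every $k\ge k_0$. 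This is the hypothesis of Lemma \ref{tecnico} with $r=p$, so $\esssup_{B_2(0)}u<+\infty$; the symmetric test $v_n=-(u_n+k)^-$ together with $(h_5)$ applied at negative $u_n$ produces the lower bound. Hence $u\in L^\infty(\R^N)\cap W^{1,p}_r(\R^N)=X_r$.

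The main obstacle is this last step. Since $|u_n|_\infty$ is not a priori uniformly controlled, the $X$-norm of $(u_n-k)^+$ may blow up with $n$, and only the full $(CPS)_\beta$-strength $\|d\J(u_n)\|_{X'}\|u_n\|_X\to 0$ keeps the right-hand side of the test identity infinitesimal. The identity $A_t(u_n-k)=A_tu_n(1-k/u_n)$ is what lets $(h_5)$ absorb the $A_t$-term without any pointwise control on $u_n$ (which would otherwise require $(h_1)$ at arbitrarily large arguments); and the passage to the limit on the variable level sets $\{u_n>k\}\to\{u>k\}$ relies on Rellich compactness on the bounded domain $B_1(0)$, which is why the restriction to $B_2(0)$ and the local boundedness $(g_3)$ of $\eta$ are both essential.
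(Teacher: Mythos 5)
Your argument for the boundedness of $(u_n)_n$ in $W^{1,p}_r(\R^N)$ rests on the Ambrosetti--Rabinowitz-type computation $\mu\J(u_n)-\langle d\J(u_n),u_n\rangle$, which needs $(h_6)$ (and Remark \ref{conbdd}, which itself presupposes $(h_6)$) in order to bound $\int\left[\mu A-a\cdot\nabla u_n-A_t u_n\right]dx$ from below by a multiple of $|\nabla u_n|_p^p$. But $(h_6)$ is not among the hypotheses of Proposition \ref{tecnico3}: only $(h_0)$--$(h_3)$, $(h_5)$, $(h_8)$ and $(g_0)$--$(g_3)$ are assumed, and without $(h_6)$ there is no available upper bound of $a(x,t,\xi)\cdot\xi+A_t(x,t,\xi)t$ in terms of $A(x,t,\xi)$, so this step cannot be carried out as written. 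What you are missing is that in the sub-$p$-linear regime $q<p$ no superquadraticity condition is needed at all: Proposition \ref{prop.4.4}, which uses only $(h_2)$ and $(g_0)$--$(g_1)$, gives $\J(u)\ge b_1\|u\|_W^p-b_2\|u\|_W^q$, so the single piece of information $\J(u_n)\to\beta$ already forces $(\|u_n\|_W)_n$ to be bounded; the derivative part of the $(CPS)_\beta$ condition is not needed for this step. This is the route the paper takes, and it is precisely why $(h_6)$ can be omitted from the statement.

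The rest of your proof --- the exterior bound \eqref{c5bis} from Lemma \ref{radiallemma}, the convergences \eqref{c2}--\eqref{c4} from reflexivity and Lemma \ref{immergo3}, and above all the $L^\infty$-bound on $u$ obtained by testing $d\J(u_n)$ with $(u_n-k)^+$, absorbing the $A_t$-term through $(h_5)$ via the factorization $A_t\,(u_n-k)=A_t u_n(1-k/u_n)$ together with $\alpha_2\le 1$, and feeding the resulting Caccioppoli-type inequality into Lemma \ref{tecnico} --- is essentially the paper's argument. The one real divergence is the passage to the limit in $n$: you keep the right-hand side as $\int_{\{u_n>k\}}|u_n|^p dx$ and pass to the limit over the moving level sets, which forces the restriction to levels $k$ with $\meas\{u=k\}=0$ and a monotone-convergence patch to recover all $k\ge k_0$; the paper instead first sends $\int g(x,u_n)R^+_ku_n\,dx\to\int g(x,u)R^+_ku\,dx$ using the compact embedding $W^{1,p}_0(B_1(0))\hookrightarrow\hookrightarrow L^l(B_1(0))$ and only afterwards bounds $\int_{B^+_k}\eta(x)|u|^q dx$ by $\esssup_{|x|\le 1}\eta\,\int_{B^+_k}|u|^p dx$, obtaining the inequality for every $k>\beta_0$ directly. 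Your variant is workable but more delicate than necessary.
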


\begin{proof}
Let $\beta \in \R$ be fixed and consider a sequence $(u_n)_n \subset X_r$
such that
\begin{equation}\label{c1}
\J(u_n) \to \beta \quad \hbox{and}\quad \|d\J(u_n)\|_{X_r'}(1 + \|u_n\|_X) \to 0\qquad
\mbox{if $\ n\to+\infty$.}
\end{equation}
From Proposition \ref{prop.4.4}, as $q<p$, $(u_n)_n$ is bounded in $W_r^{1,p}(\R^N)$ and therefore Lemma \ref{radiallemma} implies the uniform estimate \eqref{c5bis}. Furthermore, $u \in W_r^{1,p}(\R^N)$ exists such that \eqref{c2}--\eqref{c4} hold, up to subsequences. \\
Now, we have just to prove that $u \in L^\infty(\R^N)$.\\
Clearly, \eqref{c5bis} and \eqref{c4} imply
\begin{equation}
\label{ess0}
 \esssup_{|x| \ge 1} |u(x)| \ \leq \beta_0  \ < \ +\infty.
\end{equation}
Then, it is enough to prove that 
\begin{equation}
\label{ess00}
\esssup_{|x| \le 1} |u(x)| \ < \ +\infty.
\end{equation}
Arguing by contradiction, let us assume that either
\begin{equation}
\label{ess1}
\esssup_{|x| \le 1} u(x) \ = \ +\infty
\end{equation}
or
\begin{equation}
\label{ess2}
\esssup_{|x| \le 1} (- u(x)) \ = \ +\infty.
\end{equation}
If, for example, \eqref{ess1} holds 
then, for any fixed $k \in\N$, $k > \beta_0$ we have that
\begin{equation}\label{asp}
\meas(B^+_k) > 0 \quad\hbox{with}\quad B^+_k = \{x \in B_1(0): u(x) > k\}.
\end{equation}
We note that the choice of $k$ and \eqref{ess0} imply that
\begin{equation}\label{asp1}
B^+_k \ =\ \{x \in \R^N: u(x) > k\}.
\end{equation}
Moreover, if we set 
\[
 B^+_{k,n}\ =\ \{x \in B_1(0): u_n(x) > k\},\quad n \in \N,
\]
the choice of $k$ and \eqref{c5bis} imply that
\begin{equation}\label{asp2}
B^+_{k,n} \ =\ \{x \in \R^N: u_n(x) > k\} \qquad \hbox{for all $n \in \N$.}
\end{equation}
Now, consider the new function
$R^+_k : t \in\R \to R^+_kt \in \R$ such that
\[
R^+_kt = \left\{\begin{array}{ll}
0&\hbox{if $t \le k$}\\
t-k&\hbox{if $t > k$}
\end{array}\right. .
\]
By definition and \eqref{asp1}, respectively \eqref{asp2}, it results
\begin{equation}\label{asp3}
R^+_k u(x)\ =\ \left\{\begin{array}{ll}
0&\hbox{if $x \not\in B_k^+$}\\
u(x) - k&\hbox{if $x \in B_k^+$}
\end{array}\right. ,\quad
R^+_k u_n(x)\ =\ \left\{\begin{array}{ll}
0&\hbox{if $x \not\in B_{k,n}^+$}\\
u_n(x) - k&\hbox{if $x \in B_{k,n}^+$}
\end{array}\right. .
\end{equation}
Clearly, \eqref{c5bis}, \eqref{ess0} and $k > \beta_0$ imply
\begin{equation}\label{asp4}
R^+_k u \in W_0^{1,p}(B_1(0))\qquad \hbox{and}\qquad R^+_k u_n \in W_0^{1,p}(B_1(0)) \quad \hbox{for all $n \in N$.}
\end{equation}
From \eqref{c2} it follows that $R_k^+u_n \rightharpoonup R_k^+u$
weakly in $W^{1,p}_r(\R^N)$, then, from \eqref{asp4}, in $W_0^{1,p}(B_1(0))$.
As $W_0^{1,p}(B_1(0)) \hookrightarrow\hookrightarrow L^l(B_1(0))$ for any $1 \le l< p^*$, then 
\begin{equation}\label{b01}
\lim_{n\to+\infty} \int_{B_1(0)} |R_k^+u_n|^l dx\ =\  \int_{B_1(0)} |R_k^+u|^l dx \quad \hbox{for any $1 \le l< p^*$.}
\end{equation}
Moreover, from \eqref{c3} we have $u_n \to u$ strongly in $L^l(B_1(0))$ for any $l \in ]p,p^*[$
and then 
\begin{equation}\label{b011}
\lim_{n\to+\infty} \int_{B_1(0)} |u_n|^l dx\ =\  \int_{B_1(0)} |u|^l dx \quad \hbox{for any $1 \le l< p^*$.}
\end{equation}
Thus, by the sequentially weakly lower semicontinuity of the norm $\|\cdot\|_{W}$,
 we have that
\[
\int_{\R^N} |\nabla R_k^+u|^p dx + \int_{\R^N} |R_k^+u|^p dx\ \le\ 
\liminf_{n\to+\infty}\left(\int_{\R^N} |\nabla R_k^+u_n|^p dx + \int_{\R^N} |R_k^+u_n|^p dx\right),
\]
i.e., from \eqref{asp3} -- \eqref{b01} we have
\[
\begin{split}
\int_{B^+_k} |\nabla u|^p dx + \int_{B_1(0)} |R_k^+u|^p dx\ &\le\ 
\liminf_{n\to+\infty}\left(\int_{B^+_{k,n}} |\nabla u_n|^p dx + \int_{B_1(0)} |R_k^+u_n|^p dx\right)\\
&=\ \liminf_{n\to+\infty} \int_{B^+_{k,n}} |\nabla u_n|^p dx + \int_{B_1(0)} |R_k^+u|^p dx.
\end{split}
\]
Hence,
\begin{equation}\label{b1}
\int_{B^+_k} |\nabla u|^p dx \ \le\ \liminf_{n\to+\infty} \int_{B^+_{k,n}} |\nabla u_n|^p dx .
\end{equation}
On the other hand, since $\|R^+_ku_n\|_X \le \|u_n\|_X$ holds,  it follows that
\[
|\langle d\J(u_n),R^+_ku_n\rangle| \le \|d\J(u_n)\|_{X'_r}\|u_n\|_X,
\]
then \eqref{c1} and \eqref{asp} imply that $n_{k}\in \N$ exists such that
\begin{equation}\label{b2}
\langle d\J(u_n),R^+_ku_n\rangle < \meas(B^+_k) \qquad \hbox{for all $n \ge n_{k}$.}
\end{equation}
Let us point out that, being $\alpha_2 \leq 1$, assumption $(h_5)$ implies that
\[
\begin{split}
\langle d\J(u_n),R^+_ku_n\rangle\ &=\ 
\int_{B^+_{k,n}} a(x,u_n,\nabla u_n) \cdot \nabla u_n dx + \int_{B^+_{k,n}} A_t(x,u_n,\nabla u_n) (u_n-k) dx  \\
&+ \int_{B^+_{k,n}} |u_n|^{p-2}u_n (u_n-k) dx - \int_{B^+_{k,n}} g(x,u_n)R^+_ku_n dx \\ &= \int_{B^+_{k,n}}\left(1-\frac{k}{u_n}\right) \left[a(x,u_n,\nabla u_n) \cdot \nabla u_n +  A_t(x,u_n,\nabla u_n)u_n \right] dx\\
&+\int_{B^+_{k,n}}\frac{k}{u_n} a(x,u_n,\nabla u_n) \cdot \nabla u_n dx + \int_{B^+_{k,n}} |u_n|^{p-2}u_n (u_n-k) dx   - \int_{B^+_{k,n}} g(x,u_n)R^+_ku_n dx\\
& \geq \alpha_2 \int_{B^+_{k,n}} a(x,u_n,\nabla u_n)\cdot \nabla u_n dx - \int_{B^+_{k,n}} g(x,u_n)R^+_ku_n dx.
\end{split}
\]

Hence, from the previous inequalities and $(h_3)$ it follows that
\begin{equation}\label{b3}
\begin{split}
\alpha_1 \alpha_2 \int_{B^+_{k,n}} |\nabla u_n|^p dx\ & \le\ \langle d\J(u_n),R^+_ku_n\rangle + \int_{B^+_{k,n}} g(x,u_n)R^+_ku_n dx.
\end{split}
\end{equation}
Now, from \eqref{asp4}, \eqref{b01} and $(g_1)$ we get that
\begin{equation} \label{controllo_g}
\lim_{n\to+\infty} \int_{\R^N} g(x,u_n)R^+_ku_n dx = \int_{\R^N} g(x,u)R^+_ku dx.
\end{equation}

Thus, from \eqref{b1}--\eqref{controllo_g} and $(g_3)$ we obtain that
\begin{equation*}\label{b4}
\begin{split}
\int_{B^+_{k}} |\nabla u|^p dx\ &\le\
c\left(\meas(B^+_k) + \int_{B^+_k} g(x,u) R^+_ku\ dx\right)\\ 
&\leq c \ \meas(B^+_k) + c \int_{B^+_k} \eta(x) \vert u \vert^q dx \leq \bar{c}\left(\meas(B^+_k)+ \int_{B^+_{k}}\vert u \vert^p\right)
\end{split}
\end{equation*}
with $\displaystyle \bar{c}=\max\{c,\displaystyle \esssup_{|x| \leq 1} \eta(x)\}$ since 
\[
\int_{B^+_{k}} \eta(x) \vert u \vert^q dx \leq \int_{B^+_{k}} \eta(x) \vert u \vert^p dx \leq \displaystyle \esssup_{|x| \leq 1} \eta(x) \int_{B^+_{k}} \vert u \vert^p dx
\] 
as $q<p$ and $u(x)>1$ for all $x \in B^+_{k}$.
Thus, we get
\[
\int_{B^+_{k}} |\nabla u|^p dx \leq \displaystyle \bar{c} \left(\meas(B^+_k)+\int_{B^+_{k}}\vert u \vert^p\right).
\]
As this inequality holds for all $k > \beta_0$, 
Lemma \ref{tecnico} implies that \eqref{ess1} is not true. 
Thus, \eqref{ess2} must
hold. In this case, fixing any $k \in\N$, $k > \beta_0$, we have
\[
\meas(B^-_k) > 0,\qquad \hbox{with $B^-_k= \{x \in B_1(0): u(x) < - k\}$,}
\]
and we can consider
$R^-_k : t \in\R \to R^-_kt \in \R$ such that
\[
R^-_kt = \left\{\begin{array}{ll}
0&\hbox{if $t \ge -k$}\\
t+k&\hbox{if $t <- k$}
\end{array}\right. .
\]
Thus, reasoning as above, but replacing $R^+_k$ with $R^-_k$, and again by means
of Lemma \ref{tecnico} we prove  that \eqref{ess2} cannot hold.
Hence, \eqref{ess00} has to be true.
\end{proof}
Now, we are ready to prove the $(wCPS)$ condition in $\R$ by adapting the arguments developed 
in \cite[Proposition 3.4]{CP1}, or also \cite[Proposition 4.6]{CP2}, 
to our setting in the whole space $\R^N$.

\begin{proposition}\label{wCPS}
If $1 < q < p$ and $(h_0)$--$(h_9)$, $(g_0)$--$(g_3)$ hold, then 
functional $\J$ satisfies the weak Cerami--Palais--Smale condition in $X_r$
at each level $\beta \in \R$.
\end{proposition}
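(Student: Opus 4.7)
The plan is to combine Proposition \ref{tecnico3} with a Boccardo-Murat-Puel type strong-convergence argument, using Lemma \ref{lemma_bmp} as the key compactness tool, in the spirit of the Candela-Palmieri scheme from \cite[Proposition 3.4]{CP1} and \cite[Proposition 4.6]{CP2}.

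Fix $\beta \in \R$ and a $(CPS)_\beta$--sequence $(u_n)_n \subset X_r$. Proposition \ref{tecnico3} immediately supplies a candidate limit $u \in X_r$ such that, up to a subsequence, $u_n \wk u$ weakly in $W^{1,p}_r(\R^N)$, $u_n \to u$ in $L^l(\R^N)$ for every $l \in (p,p^*)$ (by Lemma \ref{immergo3}), $u_n \to u$ a.e. in $\R^N$, and the uniform pointwise bound $|u_n(x)| \le \beta_0$ holds for a.e. $|x| \ge 1$ and all $n$. In view of Lemma \ref{immergo2}, proving condition $(i)$ of Definition \ref{defwCPS} reduces to establishing $\|u_n - u\|_W \to 0$.

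To upgrade weak to strong $W^{1,p}$--convergence, we aim to invoke Lemma \ref{lemma_bmp}, whose crucial hypothesis is
\[
\int_{\R^N} \bigl[a(x,u_n,\nabla u_n) - a(x,u_n,\nabla u)\bigr] \cdot \nabla(u_n - u)\, dx \to 0.
\]
Since $u_n$ is uniformly bounded in $L^\infty$ only outside the unit ball, the natural test $u_n - u$ may fail to belong to $X_r$; so we would instead plug into $\langle d\J(u_n), v_n\rangle \to 0$ (valid by the Cerami property, once $\|v_n\|_X$ is uniformly bounded) a test function of the form $v_n = \psi(u_n - u)$, possibly composed with a truncation at a large level $k$, where $\psi(s) = s\, e^{\gamma s^2}$. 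The parameter $\gamma > 0$ will be chosen large enough so that, exploiting \eqref{contr_At_1} and the ellipticity in $(h_3)$, the term $\int A_t(x,u_n,\nabla u_n)\psi(u_n - u)\, dx$ is absorbed by $\int a(x,u_n,\nabla u_n)\cdot\nabla(u_n-u)\,\psi'(u_n - u)\, dx$. The remaining pieces of $\langle d\J(u_n), v_n\rangle$ vanish in the limit: the lower-order term $\int |u_n|^{p-2}u_n\,\psi(u_n-u)\, dx$ by the compactness of Lemma \ref{immergo3} together with $\psi(0)=0$, and the source term $\int g(x,u_n)\psi(u_n-u)\, dx$ by Lemma \ref{tecnico2}. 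After subtracting $\int a(x,u_n,\nabla u)\cdot\nabla(u_n-u)\,\psi'(u_n-u)\, dx$, which tends to $0$ by weak convergence of $\nabla u_n$ and the boundedness of $\psi'(u_n-u)$ on the support of the truncation, one reaches the hypothesis of Lemma \ref{lemma_bmp} and therefore $|\nabla u_n|_p \to |\nabla u|_p$.

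A Brezis-Lieb argument, combined with the a.e. convergence $\nabla u_n \to \nabla u$ produced inside the proof of Lemma \ref{lemma_bmp}, then yields $\nabla u_n \to \nabla u$ strongly in $L^p(\R^N)$; the strong $L^p$--convergence of $u_n$ itself follows by testing the Cerami condition with $u_n - u$ and isolating $|u_n|_p^p$. Hence $\|u_n - u\|_W \to 0$, so Definition \ref{defwCPS}$(i)$ holds, and Proposition \ref{smooth1} combined with Lemma \ref{immergo2} then gives $\J(u_n) \to \J(u) = \beta$ and $d\J(u_n) \to d\J(u) = 0$ in $X'_r$, i.e. Definition \ref{defwCPS}$(ii)$. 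The main obstacle I anticipate is the construction and analysis of $v_n$: it must simultaneously belong to $X_r$ with uniformly bounded $X$--norm, provide enough exponential weight to absorb the $A_t$--contribution via the ellipticity of $a$, and allow one to control every integral on the regions where $|u_n - u|$ may be large, which is precisely where the lack of a uniform $L^\infty$--bound on $u_n$ inside $B_1(0)$ makes the estimates delicate.
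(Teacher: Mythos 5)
Your overall strategy is the right one and matches the paper's: weak limits from Proposition \ref{tecnico3}, an exponential test function $\psi(s)=s\,\e^{\gamma s^2}$ to absorb the $A_t$--term via $(h_3)$, Lemma \ref{tecnico2} for the source term, and Lemma \ref{lemma_bmp} to upgrade to strong convergence of the gradients. However, the proposal stops exactly where the real work begins, and two essential ingredients are missing. First, the truncation issue is not resolved but only flagged (``possibly composed with a truncation at a large level $k$''). The paper's solution is to replace $(u_n)_n$ by $(T_ku_n)_n$ with $k>\max\{|u|_\infty,\beta_0\}$ and to prove, as a separate and substantial step, that $\J(T_ku_n)\to\beta$ and $\|d\J(T_ku_n)\|_{X_r'}\to 0$; only then is $\psi(T_ku_n-u)$ an admissible test function with uniformly bounded $X$--norm. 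Proving $\|d\J(T_ku_n)\|_{X_r'}\to 0$ is not routine: the problematic term $\int_{B^+_{k,n}} a(x,u_n,\nabla u_n)\cdot\nabla v\,dx$, uniform over $\|v\|_X=1$, is handled in the paper by testing $d\J(u_n)$ against the auxiliary functions $\varphi^{\pm}_{k,n}=vR^{\pm}_k u_n$ and comparing the levels $k$ and $k-1$. Nothing in your outline produces this estimate, and without it the claim that $\langle d\J(u_n),v_n\rangle\to 0$ can be converted into the hypothesis \eqref{cont_a} of Lemma \ref{lemma_bmp} does not go through.

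Second, you never invoke hypothesis $(h_9)$, which appears in the statement and is genuinely needed. The absorption of $\int A_t(x,u_n,\nabla u_n)\psi(v_{k,n})\,dx$ by the ellipticity only handles the $\phi_1(t)|\xi|^p$ part of the growth bound in $(h_1)$; the remaining piece $\int \Phi_1(u_n)|u_n|^{p-1}|\psi(v_{k,n})|\,dx$ is not controlled by $a(x,u_n,\nabla u_n)\cdot\nabla u_n$ and does not vanish by compactness alone, since the embedding of $W^{1,p}_r(\R^N)$ into $L^p(\R^N)$ is not compact. The paper kills this term (and the analogous one with $\Phi_2$ in the comparison $a(x,u_n,\nabla u)\leftrightarrow a(x,u,\nabla u)$) by combining the Radial Lemma decay $|u_n(x)|\le CM|x|^{-\theta}$ with the conditions \eqref{eta1}--\eqref{eta2} of $(h_9)$, which guarantee $(\eta_1+p)\theta>N$ and $(\eta_2+p-1)\frac{p\theta}{p-1}>N$ so that the tail integrals converge. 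Since your argument makes no use of $(h_9)$, it cannot be complete as written.
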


\begin{proof}
Let $\beta \in \R$ be fixed and consider a sequence $(u_n)_n \subset X_r$ verifying \eqref{c1}.
By applying Proposition \ref{tecnico3} the uniform estimate \eqref{c5bis} holds 
and there exists $u \in X_r$ such that, 
up to subsequences, \eqref{c2}--\eqref{c4} are satisfied.

We need to prove the following three steps:
\begin{itemize}
\item[1.] defining $T_k : \R \to \R$ such that
\begin{equation}\label{troncok}
T_kt = \left\{\begin{array}{ll}
t&\hbox{if $|t| \le k$}\\
k\frac t{|t|}&\hbox{if $|t| > k$}
\end{array}\right. ,
\end{equation}
with $k \ge |u|_{\infty} + 1$ then, as $n \to +\infty$, we have
\begin{equation}\label{pp}
\J(T_ku_n) \to \beta 
\end{equation}
and 
\begin{equation}\label{p2}
\|d\J(T_ku_n)\|_{X_r'} \to 0;
\end{equation}
\item[2.] $\|u_n - u\|_{W} \to 0$ if $n\to+\infty$, as
\begin{equation}\label{eq4}
\|T_ku_n - u\|_{W} \to 0 \qquad \hbox{as $n \to +\infty$;}
\end{equation}
\item[3.] $\J(u) = \beta$ and $d\J(u) = 0$.
\end{itemize}
\smallskip

\noindent
{\sl Step 1.} 
Taking any $k > \max\{|u|_\infty, \beta_0\}$,
if we set 
\begin{equation}\label{asp511}
 B_{k,n}\ =\ \{x \in B_1(0): |u_n(x)| > k\},\quad n \in \N,
\end{equation}
the choice of $k$ and \eqref{c5bis} imply that
\begin{equation}\label{asp5}
B_{k,n} \ =\ \{x \in \R^N: |u_n(x)| > k\} \qquad \hbox{for all $n \in \N$.}
\end{equation}
Then, from \eqref{troncok} and \eqref{asp5} we have that
\begin{equation}\label{asp51}
T_k u_n(x)\ =\ \left\{\begin{array}{ll}
u_n(x)&\hbox{for a.e. $x \not\in B_{k,n}$}\\
k\frac{u_n(x)}{|u_n(x)|}&\hbox{if $x \in B_{k,n}$}
\end{array}\right. 
\end{equation}
and
\[
|T_ku_n|_\infty \le k, \quad \|T_ku_n\|_{W} \le \|u_n\|_{W}\qquad \hbox{for each $n \in \N$.}
\]
Defining $R_k : \R \to \R$ such that
\[
R_kt = t - T_kt = \left\{\begin{array}{ll}
0&\hbox{if $|t| \le k$}\\
t-k\frac t{|t|}&\hbox{if $|t| > k$}
\end{array}\right.,
\]
from \eqref{asp5} it results
\begin{equation}\label{asp51bis}
R_k u_n(x)\ =\ \left\{\begin{array}{ll}
0&\hbox{for a.e. $x \not\in B_{k,n}$}\\
u_n(x) - k\frac{u_n(x)}{|u_n(x)|}&\hbox{if $x \in B_{k,n}$}
\end{array}\right. ;
\end{equation}
hence, \eqref{asp511} and \eqref{asp51bis} imply that
\begin{equation}\label{asp7}
R_k u_n \in W_0^{1,p}(B_1(0)) \qquad \hbox{for all $n \in N$.}
\end{equation}
Since $k > |u|_\infty$, we deduce that 
\[
T_ku(x) = u(x) \quad \hbox{and}\quad R_ku(x) = 0 \qquad \hbox{for a.e. $x \in \R^N$;} 
\]
thus, from \eqref{c2} it follows that $R_ku_n \rightharpoonup 0$
weakly in $W^{1,p}_r(\R^N)$, and, from \eqref{asp7}, in $W_0^{1,p}(B_1(0))$.
From the compact embedding of $W_0^{1,p}(B_1(0))$ in $L^l(B_1(0))$ for any $1 \le l< p^*$, we have that
\begin{equation}\label{b012}
\lim_{n\to+\infty} \int_{\R^N} |R_ku_n|^l dx\ =\  0 \quad \hbox{for any $1 \le l< p^*$.}
\end{equation}
Now, arguing as in the proof of \eqref{b3} but replacing $R^+_ku_n$ with $R_ku_n$ we obtain 
\begin{equation}\label{b8}
\begin{split}
{\alpha_1 \alpha_2} \int_{B_{k,n}} |\nabla u_n|^p dx\ &\le\
{\alpha_2} \int_{B_{k,n}} a(x,u_n, \nabla u_n) \cdot \nabla u_n dx \\
& \le\ \langle d\J(u_n),R_ku_n\rangle + \int_{B_{k,n}} g(x,u_n) R_ku_n dx.
\end{split}
\end{equation}
We note that  \eqref{c1} and $\|R_ku_n\|_X\leq \|u_n\|_X$ imply that
\begin{equation}\label{lim12}
\lim_{n\to+\infty}|\langle d\J(u_n),R_ku_n\rangle|\ =\ 0;
\end{equation}
while the boundedness of the sequences $(\|u_n\|_W)_n$ and $(\|R_ku_n\|_W)_n$, \eqref{c4}, \eqref{ess0}, \eqref{asp51bis} and Lemma \ref{tecnico2} imply that
\begin{equation}\label{lim13}
\lim_{n\to+\infty}\int_{B_{k,n}} g(x,u_n) R_ku_n dx \ =\ 0.
\end{equation}
From \eqref{b8}--\eqref{lim13} we obtain that
\begin{equation}\label{b111}
 \lim_{n\to+\infty} \int_{B_{k,n}} |\nabla u_n|^p dx\ =\ 0
\end{equation}
and 
\begin{equation}\label{b113}
\lim_{n\to+\infty} \int_{B_{k,n}} a(x,u_n,\nabla u_n)\cdot \nabla u_n dx \ =\ 0.
\end{equation}
Hence, from \eqref{asp51bis}, \eqref{b012} and \eqref{b111} it follows that
\begin{equation}\label{b112}
 \lim_{n\to+\infty} \|R_k u_n\|_W\ =\ 0.
\end{equation}
Moreover, from \eqref{c4}, \eqref{asp511} and $k > \vert u \vert_{\infty}$ we obtain
\begin{equation}
\label{lim6}
\lim_{n\to+\infty} \meas(B_{k,n})\ =\ 0,
\end{equation}
which together \eqref{b011} implies
\begin{equation}
\label{lim7}
\lim_{n\to+\infty} \int_{B_{k,n}} |u_n|^l dx\ =\ 0\qquad \hbox{for any $1 \le l< p^*$.}
\end{equation}
From \eqref{ediff} and \eqref{asp51} we have
\begin{equation}\label{vai}
\begin{split}
\J(T_ku_n) = & \int_{\R^N \setminus B_{k,n}} A(x,u_n,\nabla u_n) dx+\int_{B_{k,n}} A\left(x,k \frac{u_n}{\vert u_n \vert},0\right) dx \\ &+\frac1p\int_{\R^N \setminus B_{k,n}} |u_n|^p dx +\frac{1}{p}\int_{ B_{k,n}} k^p dx- \int_{\R^N} G(x,T_ku_n) dx\\
=& \J(u_n) -\int_{B_{k,n}} A(x,u_n,\nabla u_n) dx +\int_{B_{k,n}} A\left(x,k \frac{u_n}{\vert u_n \vert},0\right) dx \\ &-\frac{1}{p}\int_{ B_{k,n}} |u_n|^p dx+\frac{1}{p}\int_{ B_{k,n}} k^p dx - \int_{\R^N} (G(x,T_ku_n) - G(x,u_n)) dx
\end{split}
\end{equation}
where, from $(h_4)$ and \eqref{b113} we have
\begin{equation} \label{1}
\int_{B_{k,n}} A(x,u_n,\nabla u_n) dx \leq \eta_0 \int_{B_{k,n}} a(x,u_n,\nabla u_n)\cdot\nabla u_n dx\to 0,
\end{equation}
while $(h_1)$ and \eqref{c4} imply
\begin{equation} \label{2}
\begin{split}
\int_{B_{k,n}} A\left(x,k \frac{u_n}{\vert u_n \vert},0\right) dx \leq \int_{B_{k,n}} {\Phi}_{0}\left(k \frac{u_n}{\vert u_n \vert}\right){k}^{p} dx &\leq \displaystyle \Big({\max_{|t| \leq k} {\Phi}_{0}(t)}\Big)k^{p} \meas{B_{k,n}} \to 0
\end{split}
\end{equation}
and \eqref{lim6}--\eqref{lim7} imply
\begin{equation}\label{lim_nuovo}
-\frac{1}{p}\int_{ B_{k,n}} |u_n|^p dx +\frac{1}{p} \int_{ B_{k,n}} k^p dx \to 0.
\end{equation}
Furthermore, from $\eqref{asp51}$ it is
\begin{equation}
\label{lim8}
\displaystyle\int_{\R^N} (G(x,T_ku_n) - G(x,u_n)) dx=\int_{B_{k,n}} \left(G\left(x,k\frac{u_n}{\vert u_n \vert}\right) - G(x,u_n)\right) dx \to 0
\end{equation}
since \eqref{int1}, \eqref{lim6} and \eqref{lim7} imply that
\[
\int_{B_{k,n}} G\left(x,k\frac{u_n}{\vert u_n \vert}\right) dx\leq\frac{1}{q} \vert \eta \vert_{\frac{p}{p-q}} k^{q}\meas(B_{k,n})\to 0
\]
and
\[
\int_{B_{k,n}} G(x,u_n) dx \leq\frac{1}{q}{\vert \eta \vert_{\frac{p}{p-q}}}\int_{B_{k,n}}{\vert u_n \vert}^q dx\to 0.
\]
Then, \eqref{pp} follows from \eqref{c1} and \eqref{vai}–\eqref{lim8}. \\
In order to prove \eqref{p2}, we take $v \in X_r$ such that $\|v\|_X = 1$; hence,
$|v|_\infty \le 1$, $\|v\|_{W} \le 1$.

From \eqref{ediff1} and \eqref{asp51} we have that
\[
\begin{split}
\langle d\J(T_ku_n),v\rangle\ & =\  \int_{\R^N} a(x,T_ku_n,\nabla T_ku_n) \cdot \nabla v dx + \int_{\R^N} A_t(x,T_ku_n,\nabla T_ku_n) v dx\\
&+ \int_{\R^N}|T_ku_n|^{p-2}T_ku_n v dx - \int_{\R^N} g(x,T_ku_n) v dx\\
& =\int_{\R^N \setminus B_{k,n}} a(x,u_n,\nabla u_n) \cdot \nabla v dx
 +\int_{B_{k,n}} a \left(x, k \frac{u_n}{\vert u_n \vert},0\right)\cdot \nabla v\\
 &+ \int_{\R^N \setminus B_{k,n}} A_t(x,u_n,\nabla u_n) v dx +\int_{B_{k,n}} A_t \left(x, k \frac{u_n}{\vert u_n \vert},0\right)v dx\\
&+\int_{\R^N \setminus B_{k,n}}|u_n|^{p-2}u_n v dx + \int_{B_{k,n}}k^{p-1} \frac{u_n}{|u_n|}v dx  - \int_{\R^N} g(x,T_ku_n) v dx\\
&=\  \langle d\J(u_n),v\rangle - \int_{B_{k,n}} a(x,u_n,\nabla u_n)\cdot \nabla v dx
 - \int_{B_{k,n}} A_t(x,u_n,\nabla u_n) v dx\\
&- \int_{B_{k,n}}|u_n|^{p-2}u_n v dx + \int_{B_{k,n}} (g(x,u_n) - g(x,T_ku_n)) v dx+\epsilon_n,
\end{split}
\]
since $(h_1)$, \eqref{lim6}, H\"older inequality and $|\nabla v|_{p}\leq 1, |v|_{\infty}\leq 1$  imply  that
\begin{equation} \label{contr_a}
\begin{split}
\displaystyle \left| \int_{B_{k,n}} a \left(x, k \frac{u_n}{\vert u_n \vert},0\right) \cdot \nabla v dx \right|	 &\leq  \int_{B_{k,n}}{\Phi}_{2}\left(k \frac{u_n}{\vert u_n \vert}\right){k}^{p-1}|\nabla v|dx\\
& \leq \displaystyle \Big({\max_{|t| \leq k} {\Phi}_{2}(t)}\Big)\left(\int_{B_{k,n}} {k}^{p} dx\right)^{\frac{p-1}{p}} \to 0,
\end{split}
\end{equation}
\begin{equation} \label{contr_At}
\begin{split}
\displaystyle \left| \int_{B_{k,n}} A_t \left(x, k \frac{u_n}{\vert u_n \vert},0\right) v dx \right|	 &\leq \int_{B_{k,n}} {\Phi}_{1}\left(k \frac{u_n}{\vert u_n \vert}\right){k}^{p-1}dx \\
&\leq \displaystyle \Big({\max_{|t| \leq k} {\Phi}_{1}(t)}\Big) {k}^{p-1} \meas(B_{k,n}) \to 0,
\end{split}
\end{equation}
\begin{equation} \label{un2}
\left|\int_{B_{k,n}} k^{p-1} \frac{u_n}{|u_n|} v dx\right| \le k^{p-1} \meas(B_{k,n}) \ \to\ 0,
\end{equation}

where all the limits hold uniformly with respect to $v$.

Furthermore, from \eqref{contr_At_1} and \eqref{b113} we have that
\begin{equation*} \label{lim2}
\lim_{n\to+\infty} \int_{B_{k,n}} \left| A_t(x,u_n,\nabla u_n) u_n \right|dx\ =\ 0,
\end{equation*}
and then, since $1\leq k \leq \vert u_n \vert$ and $\vert v \vert_{\infty} \leq 1$, we get
\begin{equation} \label{contr_At_2}
\begin{split}
\left| \int_{B_{k,n}} A_t(x,u_n,\nabla u_n) v dx\right| &\leq \int_{B_{k,n}}\left| A_t(x,u_n,\nabla u_n)\right|dx\\ &\leq \int_{B_{k,n}}\left| A_t(x,u_n,\nabla u_n)\right| \vert u_n \vert dx \to 0
\end{split}
\end{equation}
 uniformly with respect to $v$, while from \eqref{lim7}, H\"older inequality and $\vert v\vert_p \leq 1$ we have
 \begin{equation*} \label{un1}
\left|\int_{B_{k,n}} |u_n|^{p-2}u_n v dx\right|\ \le\
 \left(\int_{B_{k,n}} |u_n|^{p} dx\right)^{\frac{p-1}{p}}\ \to\ 0.
\end{equation*}
Moreover, from \eqref{int2}, \eqref{lim6} - \eqref{lim7} and $|v|_p\leq 1$ it results
\[
\left| \int_{B_{k,n}} g(x,u_n) v dx \right| \leq {\vert \eta \vert_{\frac{p}{p-q}}} \left(\int_{B_{k,n}} \vert u_n \vert^p dx\right)^{\frac{q-1}{p}} \to 0 \quad \mbox{ uniformly with respect to $v$ }
\]
and
\[
\left| \int_{B_{k,n}} g(x,T_ku_n) v dx \right| \leq {\vert \eta \vert_{\frac{p}{p-q}}} \left(\int_{B_{k,n}} \vert T_ku_n \vert^p dx\right)^{\frac{q-1}{p}} \to 0 \quad \mbox{ uniformly with respect to $v.$ }
\]
Thus, summing up, from \eqref{c1} we obtain 
\begin{equation}\label{eq1}
|\langle d\J(T_ku_n),v\rangle| \le \eps_{k,n} + \left|\int_{B_{k,n}} a(x,u_n, \nabla u_n) \cdot \nabla v dx\right|.
\end{equation}
Now, in order to estimate the last integral in \eqref{eq1}, following 
the notations introduced in the proof of Proposition \ref{tecnico3}, let us
consider the set $B^+_{k,n}$ and the test function defined as
\[
\varphi^+_{k,n} = v R^+_{k}u_n.
\]
By definition, we have
\[
\|\varphi^+_{k,n}\|_X \le 2 \|u_n\|_X,
\]
and, thus, \eqref{c1} implies 
\[
\|d\J(u_n)\|_{X_r'}\|\varphi^+_{k,n}\|_X\ \le\ \eps_n.
\]
From definition \eqref{asp3} and direct computations we note that
\[
\begin{split}
\langle d\J(u_n),\varphi^+_{k,n}\rangle\ &=\  \int_{B^+_{k,n}} a(x,u_n,\nabla u_n) R^+_{k}u_n \cdot \nabla v dx +  \int_{B^+_{k,n}} a(x,u_n,\nabla u_n) \cdot v \nabla u_n dx\\
&+ \int_{B^+_{k,n}}A_t(x,u_n,\nabla u_n)v R^+_{k}u_n dx + \int_{B^+_{k,n}}|u_n|^{p-2}u_n vR^+_{k}u_n dx  - \int_{B^+_{k,n}} g(x,u_n)v R^+_{k}u_n dx
\end{split}
\]
where, since $B_{k,n}^+ \subset B_{k,n}$, from \eqref{lim6} we have
\[
\lim_{n\to+\infty} \meas(B_{k,n}^+)\ =\ 0,
\]
while $\vert v \vert_{\infty} \leq 1$, \eqref{b113}, \eqref{contr_At_2}, \eqref{lim7} and \eqref{int2} imply
\[
\begin{split}
\left|\int_{B^+_{k,n}} a(x,u_n,\nabla u_n) \cdot v \nabla u_n dx\right|\ &\le\  \int_{B_{k,n}^+} a(x,u_n,\nabla u_n) \cdot \nabla u_n dx \ \to\ 0, \\
\left|\int_{B^+_{k,n}}A_t(x,u_n,\nabla u_n)v R^+_{k}u_ndx\right|\ &\le\  
 \int_{B_{k,n}^+} |A_t(x,u_n,\nabla u_n)|(u_n-k)dx \\ &\leq \int_{B_{k,n}^+} |A_t(x,u_n,\nabla u_n)|u_n dx \ \to\ 0,\\
\left|\int_{B^+_{k,n}}|u_n|^{p-2}u_n vR^+_{k}u_n dx\right| &\leq \int_{B^+_{k,n}}|u_n|^{p} dx \to 0, \\
\left|\int_{B^+_{k,n}} g(x,u_n)v R^+_{k}u_n dx\right| & \le \int_{B_{k,n}^+}|g(x,u_n)| |u_n| dx \leq  {\vert \eta \vert_{\frac{p}{p-q}}}\left(\int_{B_{k,n}^+}|u_n|^{p}\right)^{\frac{q-1}{p}} \to\ 0
 \end{split}
\]
uniformly with respect to $v$. From the previous estimates it follows that 
\begin{equation}\label{limitea}
\lim_{n \to +\infty} \int_{B^+_{k,n}} a(x,u_n,\nabla u_n) R^+_{k}u_n \cdot \nabla v dx=0
\end{equation}

Now, if we fix $k > \max\{|u|_\infty, \beta_0\} + 1$, all the previous computations hold also for $k-1$ and then in particular, \eqref{b111}, \eqref{lim7} and \eqref{limitea} become
\begin{equation}\label{b111bis}
\lim_{n\to+\infty} \int_{B_{k-1,n}} |\nabla u_n|^p dx\ =\ 0, \quad \lim_{n\to+\infty} \int_{B_{k-1,n}} |u_n|^p dx\ =\ 0
\end{equation}
and
\begin{equation}\label{bbis1}
\lim_{n \to +\infty} \int_{B^+_{k-1,n}} a(x,u_n,\nabla u_n) R^+_{k-1}u_n \cdot \nabla v dx=0.
\end{equation}
From \eqref{bbis1} since $B^+_{k,n} \subset B^+_{k-1,n}$, it is 
\[
\begin{split}
\epsilon_{k,n}=\int_{B^+_{k-1,n}} a(x,u_n,\nabla u_n) R^+_{k-1}u_n \cdot \nabla v dx &= \int_{B^+_{k,n}} a(x,u_n,\nabla u_n) R^+_{k-1}u_n \cdot \nabla v dx \\
&+\int_{B^+_{k-1,n} \setminus B^+_{k,n}} a(x,u_n,\nabla u_n) R^+_{k-1}u_n \cdot \nabla v dx\\
&= \int_{B^+_{k,n}} a(x,u_n,\nabla u_n) R^+_{k}u_n \cdot \nabla v dx + \int_{B^+_{k,n}} a(x,u_n,\nabla u_n) \cdot \nabla v dx\\
&+\int_{B^+_{k-1,n} \setminus B^+_{k,n}} a(x,u_n,\nabla u_n) R^+_{k-1}u_n \cdot \nabla v dx
\end{split}
\]
where $(h_1)$, \eqref{asp3}, the properties of $B^+_{k-1,n} \setminus B^+_{k,n}$, H\"older inequality, $\vert \nabla v \vert_p \leq 1$  and \eqref{b111bis} imply
\[
\begin{split}
&\left|\int_{B^+_{k-1,n} \setminus B^+_{k,n}} a(x,u_n,\nabla u_n) R^+_{k-1}u_n \cdot \nabla v\ dx\right| \leq k \int_{B^+_{k-1,n} \setminus B^+_{k,n}} |a(x,u_n,\nabla u_n)||\nabla v|\ dx \\
& \leq k \max_{\vert t \vert \leq k} \Phi_2(t) \int_{B^+_{k-1,n} \setminus B^+_{k,n}}|u_n|^{p-1} |\nabla v| \ dx + k\max_{\vert t \vert \leq k} \phi_2(t)  \int_{B^+_{k-1,n} \setminus B^+_{k,n}}|\nabla u_n|^{p-1} |\nabla v|\ dx \\
& \leq k\max_{\vert t \vert \leq k} \Phi_2(t)  \left(\int_{B^+_{k-1,n} \setminus B^+_{k,n}} |u_n|^{p} dx\right)^{\frac{p-1}{p}}  + k\max_{\vert t \vert \leq k} \phi_2(t)  \left(\int_{B^+_{k-1,n} \setminus B^+_{k,n}} |\nabla u_n|^{p} dx\right)^{\frac{p-1}{p}} \to 0.
\end{split}
\]
The previous arguments imply
\begin{equation}\label{bbis}
\left|\int_{B^+_{k,n}} a(x,u_n,\nabla u_n)  \cdot \nabla v dx\right|\ \le\  \eps_{k,n}.
\end{equation}
Similar arguments apply also if we consider $B^-_{k,n}$
and the test functions
\[
\varphi^-_{k,n} = v R^-_{k}u_n,\quad \varphi^-_{k-1,n} = v R^-_{k-1}u_n;
\]
hence, we have
\begin{equation}\label{eq3bis}
\left|\int_{B^{-}_{k,n}}   a(x,u_n,\nabla u_n) \cdot \nabla v  dx\right|\le \eps_{k,n}.
\end{equation}
Thus, \eqref{p2} follows from \eqref{eq1}, \eqref{bbis} and \eqref{eq3bis} as all $\eps_{k,n}$ are independent of $v$.

\noindent
{\sl Step 2.}
We note that \eqref{c2}--\eqref{c4} imply that, if $n\to+\infty$,
\begin{eqnarray*}
&&T_k u_n \rightharpoonup u\ \hbox{weakly in $W^{1,p}_r(\R^N)$,}
\label{cc2}\\
&&T_ku_n \to u\ \hbox{strongly in $L^l(\R^N)$ for each $l \in ]p,p^*[$,}
\label{cc3}\\
&&T_ku_n \to u\ \hbox{a.e. in $\R^N$.}
\label{cc4}
\end{eqnarray*}
Now, arguing as in \cite{AB1}, let us consider the real map 
\[
\psi: t \in \R \mapsto \psi(t) = t \e^{\bar{\eta} t^2}\in \R,
\] 
where $\displaystyle \bar{\eta} > \left(\frac\beta{2\alpha}\right)^2$ will be fixed once $\alpha$, $\beta > 0$ are chosen in a suitable way later on. By definition,
\begin{equation}\label{psi1}
\alpha \psi'(t) - \beta |\psi(t)| > \frac\alpha 2\qquad \hbox{for all $t \in \R$.}
\end{equation}
If we define $v_{k,n} = T_ku_n - u$, since $k> |u|_{\infty}$, we have that $|v_{k,n}|_\infty \le 2k$ for all $n \in \N$.
Therefore,
\begin{equation}\label{psi2}
|\psi(v_{k,n})| \le \psi(2k),\quad 0<\psi'(v_{k,n}) \le \psi'(2k)
\qquad\hbox{a.e. in $\R^N$ for all $n\in \N$,}
\end{equation}
and
\begin{equation}\label{psi3}
\psi(v_{k,n}) \to 0, \quad
\psi'(v_{k,n}) \to 1 \qquad\hbox{a.e. in $\R^N$ if $n\to +\infty$.}
\end{equation}
Furthermore, we note that
\[
|\psi(v_{k,n})|	\ \le\ |v_{k,n}| \e^{4 k^2 \bar{\eta}}
\qquad\hbox{a.e. in $\R^N$ for all $n\in \N$,}
\]
thus, direct computations imply that $(\|\psi(v_{k,n})\|_X)_n$ is bounded, and so from \eqref{psi3}, up to subsequences, it is 
\begin{equation}\label{psi5}
\psi(v_{k,n}) \rightharpoonup 0 \quad \hbox{weakly in $W^{1,p}_r(\R^N)$,}
\end{equation}
while from \eqref{p2} it follows that
\[
\langle d\J(T_ku_n),\psi(v_{k,n})\rangle \to 0 \quad \hbox{as $n\to +\infty$,}
\]
where
\[
\begin{split}
\langle d\J(T_ku_n),\psi(v_{k,n})\rangle\ & =\  \int_{\R^N\setminus B_{k,n}} a(x,u_n,\nabla u_n) \cdot \nabla \psi(v_{k,n}) dx +\int_{B_{k,n}} a \left(x, k \frac{u_n}{\vert u_n \vert},0\right) \cdot\nabla \psi(v_{k,n}) dx \\ 
 &+ \int_{\R^N \setminus B_{k,n}} A_t(x,u_n,\nabla u_n)\psi(v_{k,n})  dx +\int_{B_{k,n}} A_t \left(x, k \frac{u_n}{\vert u_n \vert},0\right)\psi(v_{k,n}) dx\\
&+\int_{\R^N \setminus B_{k,n}} |u_n|^{p-2}u_n \psi(v_{k,n}) \ dx+ \int_{B_{k,n}}k^{p-1}\frac{u_n}{|u_n|}\psi(v_{k,n}) \ dx\\
&- \int_{\R^N} g(x,T_ku_n) \psi(v_{k,n}) dx.
\end{split}
\]
Since $(\|\psi(v_{k,n})\|_X)_n$ is bounded, arguing as in \eqref{contr_a}--\eqref{un2} it follows that  
\[
\lim_{n\to+\infty}\int_{B_{k,n}}a \left(x, k \frac{u_n}{\vert u_n \vert},0\right)\cdot \nabla \psi(v_{k,n}) dx=0,
\]
\[
\lim_{n\to+\infty} \int_{B_{k,n}} A_t \left(x, k \frac{u_n}{\vert u_n \vert},0\right)\psi(v_{k,n}) dx \ =\ 0,
\]
\[
\lim_{n\to+\infty} \int_{B_{k,n}}k^{p-1}\frac{u_n}{|u_n|}\psi(v_{k,n}) \ dx=0.
\]

Furthermore, from Lemma \ref{tecnico2} with $w_n=T_ku_n$ and $v_n = \psi(v_{k,n})$,
it follows that
\[
\lim_{n\to+\infty} \int_{\R^N} g(x,T_ku_n) \psi(v_{k,n}) dx\ =\ 0.
\]
Hence, summing up, the previous relations imply that
\begin{equation} \label{a}
\begin{split}
\eps_{k,n} & = \  \int_{\R^N \setminus B_{k,n}} a(x,u_n,\nabla u_n) \psi'(v_{k,n}) \cdot \nabla v_{k,n} dx\\
& + \int_{\R^N \setminus B_{k,n}} A_t(x,u_n,\nabla u_n) \psi(v_{k,n}) dx+\int_{\R^N \setminus B_{k,n}} |u_n|^{p-2}u_n \psi(v_{k,n}) \ dx.
\end{split}
\end{equation}
We note that from $(h_1)$ it is
\begin{equation} \label{b}
\begin{split}
& \left|\int_{\R^N \setminus B_{k,n}} A_t(x,u_n,\nabla u_n) \psi(v_{k,n}) dx\right|\  \\&\le\ \int_{\R^N \setminus B_{k,n}} \left({\Phi_{1}}(u_n)|u_n|^{p-1}+\max_{\vert t \vert \leq k}{\phi}_{1}(t)|\nabla u_n|^p \right)  |\psi(v_{k,n})|dx.
\end{split}
\end{equation}
We prove that
\begin{equation} \label{c}
\lim_{n\to+\infty} \int_{\R^N \setminus B_{k,n}} {\Phi_{1}}(u_n)|u_n|^{p-1} |\psi(v_{k,n})| dx =0.
\end{equation}
In fact, since the sequence $(u_n)_n$ is bounded in $W^{1,p}_r(\R^N)$, there exists a constant $M>0$ such that 
\[
\vert u_n\vert_W \leq M, \quad \|u_n-u\|_W \leq M \quad \mbox{ for any $n \in \N$.}
\]
Moreover, from assumption $(h_9)$ it is $$\lim_{ t \to 0} \frac{{\Phi}_{1}(t)}{|t|^{\eta_1}}= l_1 \mbox{ with } l_1 \geq 0, $$
hence, there exists $\delta_1>0$ such that
\begin{equation} \label{561_n}
{\Phi}_{1}(t) < (l_1+1)|t|^{\eta_1} \quad \quad \mbox{ for any $t \in \R, |t|<\delta_1$}.
\end{equation}
 Now, fixing $\epsilon>0$, as from \eqref{eta1} and \eqref{theta} it is $(\eta_1+p)\theta>N$, there exists $R_{\epsilon}>1$ such that
\begin{equation} \label{(i)}
\frac{CM}{R_{\epsilon}^{\theta}}<\delta_1
\end{equation} 
and 
\begin{equation}\label{(ii)}
(l_1+1)(CM)^{p+\eta_1}\e^{\bar{\eta}\frac{C^2M^2}{{R_{\epsilon}}^2{\theta}}}\int_{{B^c_{R_\epsilon}}}\frac{1}{|x|^{(\eta_1+p)\theta}}dx< \epsilon
\end{equation}
where $C$ is the constant introduced in \eqref{lemmaradiale}.
 From \eqref{lemmaradiale} and \eqref{(i)}, it follows that
 \[
|u_n(x)|\leq C \frac{M}{|x|^{\theta}} \leq C \frac{M}{R_{\epsilon}^{\theta}}< \delta_1 \quad \quad \mbox{ for all $x \in \R^N$ with $|x|>R_{\epsilon}$};
 \]
 hence, \eqref{561_n}, \eqref{lemmaradiale} and \eqref{(ii)} imply
\[
\begin{split}
\int_{(\R^N \setminus B_{k,n})\cap {{B^c_{R_\epsilon}}}} {\Phi_{1}}(u_n) |u_n|^{p-1} |\psi(v_{k,n})| dx &\leq\int_{(\R^N \setminus B_{k,n})\cap {{B^c_{R_\epsilon}}}} (l_1+1)|u_n|^{\eta_1+p-1} |u_n-u| \e^{\bar{\eta}\frac{\|u_n-u\|^2_W}{|x|^{2\theta}}} dx\\
&\leq (l_1+1)(CM)^{\eta_1+p}\e^{\bar{\eta}\frac{C^2M^2}{{R_{\epsilon}}^2{\theta}}}\int_{{B^c_{R_\epsilon}}}\frac{1}{|x|^{(\eta_1+p)\theta}}dx\\
&< \epsilon
\end{split}
\]
while from H\"older's inequality
\[
\begin{split}
&\int_{(\R^N \setminus B_{k,n}) \cap B_{R_\epsilon}} {\Phi_{1}}(u_n)  |u_n|^{p-1} |\psi(v_{k,n})| dx\\ 
&\leq \max_{|t|\leq k}{\Phi_1(t)}\left(\int_{(\R^N \setminus B_{k,n}) \cap B_{R_\epsilon}} |u_n|^{p} dx\right)^{\frac{1}{p}}\left(\int_{(\R^N \setminus B_{k,n}) \cap B_{R_\epsilon}}|\psi(v_{k,n})|^p dx\right)^{\frac{1}{p}} \to 0
\end{split}
\]
since \eqref{psi5} implies that $\psi(v_{k,n}) \to 0$ in $L{^p_{loc}}(\R^N)$. Then, \eqref{c} follows.

Moreover, from $(h_3)$ it follows that
\begin{equation} \label{d}
\begin{split}
\int_{\R^N \setminus B_{k,n}} |\nabla u_n|^{p} |\psi(v_{k,n})| dx &\leq \frac{1}{\alpha_1}\int_{\R^N \setminus B_{k,n}} a(x,u_n,\nabla u_n) \cdot \nabla u_n|\psi(v_{k,n})| dx \\
&= \frac{1}{\alpha_1} \int_{\R^N \setminus B_{k,n}}a(x,u_n,\nabla u_n) \cdot \nabla v_{k,n}|\psi(v_{k,n})| dx\\&+ \frac{1}{\alpha_1} \int_{\R^N \setminus B_{k,n}}a(x,u_n,\nabla u_n) \cdot \nabla u |\psi(v_{k,n})|dx.
\end{split}
\end{equation}

where the boundedness of $(u_n)_n$ in $W^{1,p}_r(\R^N)$, $(h_1)$, H\"older's inequality, 
\eqref{psi3} and the Lebesgue's Dominated Convergence Theorem imply that
\begin{equation} \label{e}
\begin{split}
&\left| \int_{\R^N \setminus B_{k,n}} a(x,u_n,\nabla u_n) \cdot \nabla u |\psi(v_{k,n})|dx\right| \\
&\leq  \int_{\R^N \setminus B_{k,n}} {\Phi}_{2}(u_n){\vert u_n\vert}^{p-1}|\nabla u ||\psi(v_{k,n})|dx  + \int_{\R^N \setminus B_{k,n}} {\phi}_{2}(u_n) {\vert \nabla u_n \vert}^{p -1} |\nabla u||\psi(v_{k,n})|dx \\ 
&\leq \left( \max_{\vert t \vert \leq k} \Phi_2(t)\right)|u_n|_{p}^{p-1} \left(\int_{\R^N \setminus B_{k,n}} |\nabla u |^{p}|\psi(v_{k,n})|^{p}dx\right)^{\frac{1}{p}}\\& +  \left(\max_{\vert t \vert \leq k} \phi_2(t)\right) |\nabla u_n|^{p-1}_p \Big(\int_{\R^N \setminus B_{k,n}}|\nabla u|^p |\psi(v_{k,n}|^p) dx\Big)^{\frac{1}{p}} \to 0.
\end{split}
\end{equation}
From \eqref{a}--\eqref{e} we obtain
\[
\begin{split}
\epsilon_{k,n} & \geq \int_{\R^N \setminus B_{k,n}} a(x, u_n, \nabla u_n)\psi'(v_{k,n}) \cdot \nabla v_{k,n}dx \\
&-\frac{1}{\alpha_1}\max_{|t|\leq k}{\phi_1(t)} \int_{\R^N \setminus B_{k,n}}a(x, u_n, \nabla u_n) \cdot \nabla v_{k,n} |\psi(v_{k,n})| dx + \int_{\R^N \setminus B_{k,n}}|u_n|^{p-2}u_n \psi(v_{k,n}) dx.\\
\end{split}
\]
Thus, setting
\[
h_{k,n}(x) = \psi'(v_{k,n})
-\ \max_{|t|\leq k}{\phi_1(t)} |\psi(v_{k,n})|,
\]
and choosing, in the definition of function $\psi$, constants $\alpha = 1$ and $\beta = \max_{|t|\leq k}{\phi_1(t)}$, from \eqref{psi1} it results
\begin{equation}\label{psi6}
h_{k,n}(x) \ > \ \frac12 \quad \hbox{a.e. in $\R^N$.}
\end{equation}
Therefore, we have that
\begin{equation}\label{estimate1}
\begin{split}
\eps_{k,n}\  &\ge \ \int_{\R^N \setminus B_{k,n}} h_{k,n} a(x,u_n, \nabla u_n) \cdot \nabla v_{k,n} dx
+ \int_{\R^N \setminus B_{k,n}} |u_n|^{p-2}u_n \psi(v_{k,n}) dx\\
&=\  \int_{\R^N \setminus B_{k,n}} a(x,u,\nabla u) \cdot \nabla v_{k,n} dx\\
& + \int_{\R^N \setminus B_{k,n}} h_{k,n}\left( a(x,u_n,\nabla u_n) - a(x,u_n,\nabla u)\right)\cdot \nabla v_{k,n} dx\\
&+ \int_{\R^N \setminus B_{k,n}} \left(h_{k,n} a(x,u_n,\nabla u) - a(x,u,\nabla u)\right) \cdot \nabla v_{k,n} dx\\
& + \int_{\R^N \setminus B_{k,n}} (|u_n|^{p-2}u_n - |u|^{p-2}u) \psi(v_{k,n}) dx + \int_{\R^N \setminus B_{k,n}} |u|^{p-2}u \psi(v_{k,n}) dx,
\end{split}
\end{equation}

where \eqref{c2}, respectively \eqref{psi5} imply that
\[
\lim_{n\to+\infty}\int_{\R^N \setminus B_{k,n}} a(x,u, \nabla u) \cdot \nabla v_{k,n} dx = 0,\qquad
\lim_{n\to+\infty} \int_{\R^N \setminus B_{k,n}} |u|^{p-2}u \psi(v_{k,n}) dx = 0.
\]
On the other hand, we note that $(h_0)$, \eqref{c4} and \eqref{psi3} infer that
$$h_{k,n} a(x,u_n,\nabla u) - a(x,u,\nabla u) \to 0 \quad \quad\mbox{ a.e. in $\R^N$}$$ 
while from H\"older's inequality it follows that
\begin{equation}\label{1}
\begin{split}
& \left|\int_{\R^N \setminus B_{k,n}} \Big(h_{k,n} a(x,u_n,\nabla u) - a(x,u,\nabla u)\Big)\cdot \nabla v_{k,n} dx\right| \\
& \quad \leq \Big(\int_{\R^N \setminus B_{k,n}} {\left|h_{k,n} a(x,u_n,\nabla u) - a(x,u,\nabla u)\right|}^{\frac{p}{p-1}} dx\Big)^{\frac{p-1}{p}}|\nabla v_{k,n}|_p.
\end{split}
\end{equation}
From \eqref{psi2} and $(h_1)$ we have that for any $x \in (\R^N \setminus B_{k,n})$
\begin{equation}\label{569nuova}
\begin{split}
&\vert h_{k,n} a(x,u_n,\nabla u) - a(x,u,\nabla u) \vert^{\frac{p}{p-1}} \\
 &\leq \left((\psi'(2k))\Phi_2(u_n)|u_n|^{p-1}+(\max_{|t|\leq k}{\phi_2(u_n)} )|\nabla u|^{p-1} \vert + |a(x,u, \nabla u)|\right)^{\frac{p}{p-1}}\\
 &\leq c\left((\Phi_2(u_n))^{\frac{p}{p-1}}|u_n|^{p} +|\nabla u|^p + |u|^{p}\right).
\end{split}
\end{equation}
Now, from $(h_9)$ it is $$\lim_{ t \to 0} \frac{{\Phi}_{2}(t)}{|t|^{\eta_2}}= l_2 \mbox{ with } l_2 \geq 0, $$
hence, there exists $\delta_2>0$ such that
\begin{equation*} 
{\Phi}_{2}(t) < (l_2+1)|t|^{\eta_2} \quad \quad \mbox{ for any $t \in \R, |t|<\delta_2$}.
\end{equation*}
Therefore, taking $\bar{R}>0$ such that $\frac{CM}{{\bar{R}}^{\theta}}< \delta_2$ and using \eqref{lemmaradiale} it is
\begin{equation} \label{570nuova}
\begin{split}
(\Phi_2(u_n))^{\frac{p}{p-1}}|u_n|^{p} &\leq (l_2+1)^{\frac{p}{p-1}}|u_n|^{\frac{\eta_2 p}{p-1} +p}\\
&\leq (l_2+1)^{\frac{p}{p-1}} \left(\frac{CM}{|x|^{\theta}}\right)^{(\eta_2 + p-1)\frac{p}{p-1}}\quad  \mbox{ for all $x \in \R^N$ with $|x|> \bar{R}$}.
\end{split}
\end{equation}
Now from \eqref{eta2} and \eqref{theta} it is $({\eta_2+p-1})\frac{p\theta}{p-1}>N$ , hence, recalling that $u \in W^{1,p}(\R^N)$, for any $\epsilon >0$ there exists $R_{\epsilon}>\bar{R}$ such that the previous estimates \eqref{569nuova} and \eqref{570nuova} imply
\begin{equation}\label{2}
\int_{(\R^N \setminus B_{k,n})\cap B^c_{R_\epsilon}} {\left|h_{k,n} a(x,u_n,\nabla u) - a(x,u,\nabla u)\right|}^{\frac{p}{p-1}} dx < \epsilon.
\end{equation}
Moreover
\[
{\left|h_{k,n} a(x,u_n,\nabla u) - a(x,u,\nabla u)\right|}^{\frac{p}{p-1}} \leq b_5 +b_6|\nabla u|^p \quad \mbox{ a.e. in $\R^N$, }
\]
hence the Lebesgue's Dominated Convergence Theorem gives us that
\begin{equation}\label{3}
\lim_{n\to+\infty}\int_{(\R^N \setminus B_{k,n})\cap B_{R_\epsilon}(0)}|h_{k,n} a(x,u_n,\nabla u) - a(x,u,\nabla u)|^{\frac{p}{p-1}} dx = 0.
\end{equation}

Then, as $(\|v_{k,n}\|_{W})_n$ is bounded, \eqref{1},\eqref{2} and \eqref{3} imply
\[
\lim_{n\to+\infty}\int_{\R^N \setminus B_{k,n}} \Big(h_{k,n} a(x,u_n,\nabla u) - a(x,u,\nabla u)\Big)\cdot \nabla v_{k,n} dx=0
\]

Thus, from \eqref{psi6}--\eqref{estimate1}, by using the previous estimate,  the strong convexity of the power function with exponent $p > 1$, $(h_7)$ and $\displaystyle{\e^{\bar{\eta} v_{k,n}^2}} \ge 1$ we obtain
\[
\begin{split}
\eps_{k,n}\  &\ge \ \frac{1}{2} \int_{\R^N \setminus B_{k,n}} \Big(a(x,u_n,\nabla u_n)-a(x,u_n,\nabla u)\Big) \cdot \nabla (u_n-u) dx \\ &+ \int_{\R^N \setminus B_{k,n}} (|u_n|^{p-2}u_n - |u|^{p-2}u) (u_n-u) dx.
\end{split}
\]
Using again $(h_7)$ and the strong convexity of the power function with exponent $p > 1$ we have
\[
\lim_{n\to +\infty}\int_{\R^N \setminus B_{k,n}}   \Big(a(x,u_n,\nabla u_n)-a(x,u_n,\nabla u)\Big) \cdot \nabla (u_n-u) dx =0
\]
and
\[
\lim_{n\to +\infty}\int_{\R^N \setminus B_{k,n}}  (|u_n|^{p-2}u_n - |u|^{p-2}u) (u_n-u)  dx=0
\]

Hence, $u_n \to u$ in $L^p(\R^N)$ and by applying Lemma \ref{lemma_bmp}, we get that $\nabla u_n \to \nabla u$ in $L^p(\R^N)$; thus, $\|u_n-u\|_W \to 0$.

{\sl Step 3.} 
As from definition it is $|T_ku_n|_{\infty}\leq k$ for all $n\in \N$, the proof follows from \eqref{eq4}, \eqref{cc4}, Proposition \ref{smooth1}, \eqref{pp} and \eqref{p2}.
\end{proof}

\begin{proof}[Proof of the Theorem $\ref{ThExist}$] 
The functional $\J$ is bounded from below in $X$ (see Proposition \ref{prop.4.4}) and satisfies condition $(wCPS)$ in $\R$ (see Proposition \ref{wCPS}), thus, from Proposition \ref{Minimum Principle}, $\J$ admits a minimum point $u^*$ in $X$. Clearly, it is
\[
\J(u^*)=\min_{u\in X}\J(u) \leq \J(0)=0.
\] 
In order to prove that $u^*$ is not trivial since $\J(u^*)<0$, we consider $\varphi_1 \in W_{0}^{1,p}(B_1(0))$ the unique eigenfunction associated to the first eigenvalue $\lambda_1$ of $-\Delta_{p}$ in $B_1(0)$ \hbox{ (see \cite{Lin}) } such that 
\[
\begin{split}
&\varphi_1>0 \hbox{ a.e. in $B_1(0)$, } \
\varphi_{1} \in L^\infty(B_1(0)), \\ 
 & \int_{B_1(0)}|\varphi_{1}|^p dx =1, \ \int_{B_1(0)}|\nabla \varphi_{1}|^p dx =\lambda_1
\end{split}
\]
and we also denote by $\varphi_{1}$ its null extension in $\R^N$.
Taking $\tau \in \left(0,1\right)$, from $(h_1)$ we have
\[
\begin{split}
\J(\tau\varphi_1)&= \int_{\R^N} A(x,\tau\varphi_1, \nabla(\tau \varphi_1))dx +\frac{1}{p}\int_{\R^N}|\tau \varphi_1|^p dx-\int_{\R^N} G(x,\tau\varphi_1)dx \\
&\leq \int_{B_1(0)}\left({\Phi}_{0}(\tau \varphi_1(x)){\vert \tau \varphi_1(x) \vert}^{p} + {\phi}_{0}(\tau \varphi_1(x)) {\vert \nabla (\tau \varphi_1(x)) \vert}^{p}\right) dx \\ & +\frac{\tau^p}{p}\int_{B_1(0)}|\varphi_1|^p dx -\int_{\Omega} G(x,\tau\varphi_1)dx \\& \leq c_1 \tau^p - \int_{B_1(0)} G(x,\tau\varphi_1)dx,
\end{split}
\]
where
$c_1= \displaystyle{\max_{0 \leq t \leq |\varphi_{1}|_{\infty}} {\Phi}_{0}(t)} + \lambda_1 \displaystyle{\max_{0 \leq t \leq |\varphi_{1}|_{\infty}} {\phi}_{0}(t)} +\frac{1}{p}$.

Now, from assumption $(g_4)$ there exists a constant $\delta>0$ such that for any $s \in [0,\delta]$ and for a.e. $x \in B_1(0)$ it is $G(x,s)> 2 c_1 s^p.$ \\ Then, for any $\tau>0$ small enough, in particular $ 0<\tau<\frac{\delta}{|\varphi_{1}|_{\infty}}$, it results
\[
\J(\tau\varphi_1) \leq c_1 \tau^p - 2c_1 \tau^p < 0.  
\]
Finally, let us prove that $\J$ has at least two solutions, one negative and one positive.  For this, let us denote by $u_{+}=\max \{0, u\}$ and  $u_{-}= \max \{0, -u\}$, the positive and the negative part of $u$, respectively, so that $u=u_{+}-u_{-}$. 

If we replace $g(x,u)$ by $g_{+}(x,u):=g(x,u_{+})$, all the previous statements still hold true for the functional $\J_{+}$ obtained by replacing $G$ with $G_{+}$, defined as $\displaystyle{G_{+}(x,t)=\int_0^t g_{+}(x,s) ds}$. In particular, $\J_{+}$ has a nontrivial critical point $u$. Hence, from $(h_5)$ and $(h_3)$ we find that
\[
\begin{split}
0&= \left\langle d\J_{+}(u), -u_{-}\right\rangle= \int_{\R^N}a(x, -u_-,\nabla (-u_-)) \nabla (-u_-) dx +\int_{\R^N} A_t(x, -u_-,\nabla (-u_-))(-u_-)dx \\
&+\int_{\R^N}|u_-|^{p}dx+ \int_{\R^N} g_{+}(x,u)u_-dx\\
&\geq \alpha_1\alpha_2 \int_{\R^N} \vert \nabla u_{-}\vert^p dx+\int_{\R^N} |u_-|^{p} dx\\
&\geq \alpha_1\alpha_2 \|u_{-}\|_{W}.
\end{split}
\]
Hence, $u_{-}=0 \mbox{ a.e. in $\R^N$}$ and $u$ is a positive critical point of $\J$.

Similarly, replacing $g(x,u)$ with $g(x,-u_-)$, we find a negative solution of the equation \eqref{euler}.
\qedhere
\end{proof}


\end{document}